\newtheorem{example}{\textit{Example}}
\newtheorem{assumption}{Assumption}
\newcommand{\IR}{{\mathbb{R}}}
\newcommand{\IZ}{{\mathbb{Z}}}
\newcommand{\mO}{{\mathcal{O}}}
\newcommand{\tT}{\intercal}
\newcommand{\bx}{{\bf x}}
\newcommand{\bb}{{\bf b}}
\newcommand{\by}{{\bf y}}
\newcommand{\bu}{{\bf u}}
\newcommand{\bA}{{\bf A}}
\newcommand{\bP}{{\bf P}}
\newcommand{\bB}{{\bf B}}
\newcommand{\bS}{{{\bf S}}}
\newcommand{\bQ}{{{\bf Q}}}
\newcommand{\bG}{{{\bf G}}}
\newcommand{\bz}{{{\bf z}}}
\newcommand{\gafa}{g^{(\alpha)} }
\newcommand{\Gafa}{G^{(\alpha)} }
\newcommand{\btheta}{{\boldsymbol \theta} }
\newcommand{\bafa}{{\boldsymbol \alpha} }
\newcommand{\cafai}{c(\alpha_{i})}
\newcommand{\cafa}{c(\alpha)}
\crefname{hypothesis}{Hypothesis}{Hypotheses}
\title{Spectral Analysis for Preconditioning of Multi-dimensional Riesz Fractional Diffusion Equations\thanks{Submitted to the editors DATE.
\funding{This work is supported by research grants of the Science and Technology Development Fund, Macau SAR (file no. 0118/2018/A3), MYRG2018-00015-FST from University of Macau, and
the HKRGC GRF 12306616,
12200317, 12300218, 12300519 and 17201020.}}}
\author{Xin Huang\thanks{Department of Mathematics, University of Macau,
Macao (\email{hxin.ning@qq.com}).}
	\and
Xue-Lei Lin\thanks{Department of Mathematics, University of Macau, Macao (\email{\mbox{hxuellin@gmail.com}}).}
    \and
Michael K. Ng\thanks{Corresponding author. Department of Mathematics, The University of Hongkong, Hong Kong (\email{\mbox{mng@maths.hku.hk}}).}
    \and
	Hai-Wei Sun\thanks{Department of Mathematics, University of Macau, Macao (\email{\mbox{hsun@um.edu.mo}}).}}
\begin{document}

\maketitle

\begin{abstract}
In this paper, we analyze the spectra of the preconditioned matrices arising from
discretized multi-dimensional Riesz spatial fractional diffusion equations.
The finite difference method is employed to approximate the multi-dimensional Riesz fractional derivatives, which will generate symmetric positive definite ill-conditioned multi-level Toeplitz matrices. The preconditioned conjugate gradient method with a preconditioner based on the sine transform is employed to solve the resulting linear system. Theoretically, we prove that the spectra of the preconditioned matrices are uniformly bounded
in the open interval $(1/2,3/2)$ and thus the preconditioned conjugate gradient method converges linearly. The proposed method can be extended to multi-level Toeplitz matrices generated by functions with zeros of fractional order.
Our theoretical results fill in a vacancy in the literature. Numerical examples
are presented to demonstrate our new theoretical results in the literature and show the
convergence performance of the proposed preconditioner that is better than  other existing
preconditioners.
\end{abstract}

\begin{keywords}
Riesz fractional derivative, multi-level Toeplitz matrix,
 sine transform based preconditioner, condition number, fractional order zero, preconditioned conjugate gradient method
\end{keywords}

\begin{AMS}
65F08, 65M10, 65N99
\end{AMS}

\section{Introduction}

In this paper, we study the preconditioning technique for the following multi-dimensional Riesz fractional diffusion equations
\begin{equation}\label{model-equation}
-\sum\limits_{i=1}^{m}d_i\frac{\partial^{\alpha_{i}}u(\bf x)}{\partial|x_{i}|^{\alpha_{i}}}=y({\bf x}),\quad {\bf x}\in {\Omega}=\prod_{i=1}^m[a_{i},b_{i}]\subset \IR^m,
\end{equation}
subject to the boundary condition
\begin{equation*}
u({\bf x})=0, \ \bx\in\partial\Omega,
\end{equation*}
where $d_{i}>0$ for $i=1,\dots,m$,   ${\bf x}=(x_{1},\dots,x_{m})\in \IR^m$, $y({\bf x}): \IR^m\mapsto\IR$ is the source term, and $\frac{\partial^{\alpha_i}u({\bf x})}{\partial|x_i|^{\alpha_i}}$ is the Riesz fractional derivative of ${\alpha_i}\in(1,2)$ with respect to $x_i$ defined by
\begin{equation}\label{Riesz}
\frac{\partial^{\alpha_i}u({\bf x})}{\partial|x_i|^{\alpha_i}}=\cafai \left(_{a_i}D_{x_i}^{\alpha_i}u({\bf x})+_{x_i}D_{b_i}^{\alpha_i}u({\bf x})\right),\
\cafai=\frac{-1}{2\cos(\frac{\alpha_{i}\pi}{2})}>0.
\end{equation}
The above left and right Riemann-Liouville (RL) fractional derivatives are defined by
\begin{equation}
_{a_i}{\rm D}_{x_i}^{\alpha_i}u({\bf x})=\frac{1}{\Gamma(2-{\alpha_i})}\frac{\partial^{2}}{\partial x_{i}^{2}}\int_{a_i}^{x_i}\frac{u(x_{1},x_{2},\dots,x_{i-1},\xi,x_{i+1},\dots,x_{m})}{(x_{i}-\xi)^{\alpha_{i}-1}}{\rm d}\xi,
\end{equation}
\begin{equation}\label{right-LR}
_{x_i}{\rm D}_{b_i}^{\alpha_i}u({\bf x})=\frac{1}{\Gamma(2-{\alpha_i})}\frac{\partial^{2}}{\partial x_{i}^{2}}\int_{x_i}^{b_i}\frac{u(x_{1},x_{2},\dots,x_{i-1},\xi,x_{i+1},\dots,x_{m})}{(\xi-x_i)^{\alpha_i-1}}{\rm d}\xi,
\end{equation}
respectively, where ${\Gamma(\cdot)}$ is the gamma function.

Fractional calculus has received an increasing interest since its applications involve various fields  including physics, chemistry, engineering; see \cite{CW-Springer-2001,GS-Toepliz-form,H-application-physics-2000,P-FDE-1999}. The Riesz fractional derivative, which derives from the kinetic of chaotic dynamics \cite{AG-Chaos-1997}, is generalized as one of the most popular fractional calculus. Recently, the study of the Riesz fractional derivative has been urgent and significant as it can be applied to lattice model with long-range interactions \cite{ZD-Riesz-2012}, nonlocal dynamics \cite{T-Riesz-appication-2010}, and so on.

After discretization by the finite difference method, the resulting coefficient matrix of the above multi-dimensional Riesz fractional derivative in \eqref{model-equation} is a dense
multi-level Toeplitz matrix; i.e.,
each block has a Toeplitz structure. It is interesting to note that such multi-level
Toeplitz matrix can be generated by a continuous real-valued even function which is
nonnegative defined on the interval $[-\pi,\pi]$ \cite{PS-JSC-2016}. Moreover,
the diagonal entries are the Fourier coefficients of the generating function
with ${\alpha}$-th order zero at the origin ($1 < \alpha < 2$). Hence, the condition number of
the discretized linear system is unbounded as the matrix size tends to infinity.  More precisely, the condition number grows as $n^{\alpha}$, where $n$ denotes the matrix size; see \cite{RM-band-1991,BG-LAA-1998,Serra-LAA-1998}. Therefore, the resulting linear system arising from \eqref{model-equation} is ill-conditioned and thus the conjugate gradient (CG) method for this system converges slowly.

In order to speed up the convergence of the CG method, preconditioning techniques have
been proposed and developed for ill-conditioned Toeplitz systems. For examples,
banded Toeplitz preconditioners \cite{RM-band-1991,BFS-CMA-1993} were
proposed to handle ill-conditioned Toeplitz linear systems where the generating functions of Toeplitz matrices have zeros of even order.
When these banded Toeplitz preconditioners are applied to
Toeplitz matrices generated by functions
with ${\alpha}$-th order zero at the origin ($1 < \alpha < 2$),
the condition numbers of these preconditioned systems are not uniformly
bounded.

Besides, several strategies have been exploited for the ill-conditioned Toeplitz systems, such as $\tau$-preconditioners (which can be diagonalized by the discrete sine transform matrix) \cite{BB-ACM-1990,BD-SISC-1997,CNW-LAA-1996,S-MC-1999},  circulant preconditioners \cite{PS-BIT-1999,CYM-NA-2000,PS-SISC-2001,NSV-TCS-2004}, and multigrid methods \cite{FS-CAL-1991,FS-SISC-1996,RS-JSC-1998,SJC-BIT-2001,BFS-CMA-1993,Serra-Calcolo-1995}.  These approaches can significantly speed up the convergence
of iterative methods for solving Toeplitz systems where their
generating functions have an
${\alpha}$-th order zero at the origin ($1 < \alpha < 2$).
However, the linear convergence of these methods
cannot be {\it theoretically} confirmed
for solving such Toeplitz systems.
Lately, some efficient preconditioners are developed to solve
linear systems arising from fractional diffusion equations;
see \cite{DKMT-AdvCM-2020,LYJ-CCP-2015,DMS-JCP-2016, BEV-ArXiv-2019,MDDM-JCP-2017}. Nevertheless, from the theoretical point of view, the spectra of these preconditioned matrices are not shown to
be bounded independent of the matrix sizes
and hence the linear convergence cannot be guaranteed.

In order to tackle this theoretical problem, Noutsos, Serra and Vassalos
\cite{NS-2016-LAA} exploited a multiple step preconditioning and applied to the case where the generating functions of coefficient matrices have fractional order zeros.
In their method, they proposed a new
$\tau$-preconditioner that is constructed from the generating function of the given
Toeplitz matrix. Numerical results were shown that their method worked
very well for the Toeplitz matrices whose generating functions have
fractional order zeros.
Theoretically, they have proved that the largest eigenvalue
of the preconditioned matrix has an upper bound independent of the matrix size. Nevertheless, it is still unclear whether the smallest eigenvalue of the preconditioned matrix
is bounded below away from zero; see the remark in \cite{NS-2016-LAA}.


The main aim of this paper is to conduct the spectral analysis of the
$\tau$-preconditioner for the ill-conditioned multi-level Toeplitz system arising from the discretized Riesz fractional derivatives.
Theoretically, we prove that the spectra of the $\tau$-preconditioned matrices are uniformly bounded
in the open interval $(1/2,3/2)$ and thus the preconditioned CG (PCG)
method converges linearly. Furthermore, the proposed method can be extended to multi-level Toeplitz matrices generated by functions with zeros of fractional order. Similarly, we show that the spectra
of these preconditioned multi-level Toeplitz matrices are uniformly bounded. Numerical examples
are presented to verify our new theoretical results in the literature and show
the good performance of the proposed preconditioner.

The outline of the rest paper is as follows. In \Cref{multi-level_toeplitzsystem},
multi-level Toeplitz matrices are generated.
In \Cref{preconditioner1}, the preconditioner is proposed and developed. In \Cref{spectrumanalysis},
  the spectral analysis of the preconditioned matrix is discussed.
In \Cref{extension}, a new preconditioning technique for multi-level Toeplitz matrices
is studied.
Numerical experiments are given in \Cref{numericalresults} to show the
performance of the proposed preconditioner. Finally, some concluding remarks are given in \Cref{conclusion}.
\section{Multi-level Toeplitz matrices}\label{multi-level_toeplitzsystem}
A matrix, whose entries are constant along the diagonals with the following form
\begin{equation}\label{toeplitz-matrix}
T_{n}=\left[
\begin{array}{ccccc}
t_{0}   & t_{-1}  &\dots  & t_{2-n} & t_{1-n} \\
t_{1}   & t_{0}   &t_{-1} &\dots    & t_{2-n} \\
\vdots  & t_{1}   & t_{0} &\ddots   & \vdots  \\
t_{n-2} & \ddots &\ddots & \ddots  & t_{-1}  \\
t_{n-1} & t_{n-2} & \dots & t_{1}   & t_{0}   \\
\end{array}
\right],
\end{equation}
is called a Toeplitz matrix. Assume that the diagonals $\{t_{k}\}_{k=1-n}^{n-1}$ of $T_{n}$ are the Fourier coefficients of a function $f$; i.e.,
\begin{equation*}
t_{k}=\frac{1}{2\pi}\int_{-\pi}^{\pi}f(\theta)e^{-{\bf i}k\theta}d\theta,
\end{equation*}
then the function $f$ is called the generating function of $T_{n}$. Generally, denote $T_{n}=T_{n}(f)$ to emphasize that an $n\times n$ Toeplitz matrix $T_{n}$ is generated by $f$. Moreover, if the generating function is real  and even, then $T_{n}$ is real symmetric for all $n$.

Let ${\btheta}=(\theta_{1},\theta_{2},\dots,\theta_{m})\in [-\pi,\pi]^{m}$, $f(\btheta )=f(\theta_{1},\dots,\theta_{m})\in L^{1}([-\pi,\pi]^{m})$. The matrix generated by function $f({\btheta})$ is an $m$-level Toeplitz matrix with Toeplitz structure on each level.
Let $n_{i}$ for $i=1,2,\dots,m$ be positive integers. Denote $N=\prod\limits_{i=1}^{m}n_{i}$. Then an $m$-level Toeplitz matrix with the size $N\times N$ is a block Toeplitz matrix with Toeplitz block, and its form is
\begin{equation}\label{m-level-matrix}
T_{N}^{m}=
\left[
\begin{array}{cccc}
T_{0}^{m-1}       & T_{-1}^{m-1} & \cdots      & T_{1-{n_m}}^{m-1} \\
T_{1}^{m-1}       & T_{0}^{m-1}  & \ddots      & \vdots             \\
\vdots            & \ddots       & \ddots      & T_{-1}^{m-1}        \\
T_{n_{m}-1}^{m-1} & \cdots       & T_{1}^{m-1} & T_{0}^{m-1}
\end{array}
\right],
\end{equation}
where each block $T_{j}^{m-1}$ for $j=0,\dots,n_{m}-1$ is an $(m-1)$-level block Toeplitz of size $(n_{1}\cdots n_{m-1})\times(n_{1}\cdots n_{m-1})$ with $(m-2)$-level Toeplitz block of size $(n_{1}\cdots n_{m-2})\times(n_{1}\cdots n_{m-2})$, and so on. Let $(j_{1},\dots,j_{m})\in {\IZ}^{m}$ be a multi-index. Then the coefficients of $T_{N}^{m}$ can be obtained by \cite{P-JMA-2019}
\begin{equation}\label{Fouriercoefficient}
t_{j_{1},j_{2},\dots,j_{m}}=\frac{1}{(2\pi)^{m}}\int_{[-\pi,\pi]^{m}}f({\btheta})e^{-{\bf i}\sum\limits_{i=1}^{m}\theta_{i}j_{i}} d{\btheta}.
\end{equation}

\subsection{Discretized 1D Riesz fractional derivative}

Firstly, we consider the one dimensional case.
Let $n$ be the partition of the interval $[a,b]$. We define a uniform spatial partition
\begin{equation*}
h=\frac{b-a}{n+1},\quad \eta_{j}=a+jh,\quad {\rm for}\ j=0,1,\dots,n+1.
\end{equation*}
Then the following shifted Gr\"{u}nwald-Letnikov formula is exploited to approximate the left- and right- RL fractional derivatives at grid point $\eta_{j}$,
\begin{equation}\label{L-RL}
_{a}D_{x}^{\alpha}u(\eta_{j})=\frac{1}{h^{\alpha}}\sum_{k=0}^{j+1}g_{k}^{(\alpha)}u(\eta_{j-k+1})+\mO(h),
\end{equation}
\begin{equation}\label{R-RL}
_{x}D_{b}^{\alpha}u(\eta_{j})=\frac{1}{h^{\alpha}}\sum_{k=0}^{n-j+2}g_{k}^{(\alpha)}u(\eta_{j+k-1})+\mO(h),
\end{equation}
where the coefficients $g_{k}^{(\alpha)}$ are defined by
\begin{equation}\label{gg}
g_{0}^{(\alpha)}=1,\quad g_{k}^{(\alpha)}=\left(1-\frac{\alpha+1}{k}\right)g_{k-1}^{(\alpha)},\quad {\rm for}\ k\geq 1.
\end{equation}
It can be shown that the coefficients $g_{k}^{(\alpha)}$ defined above have the following properties.

\begin{lemma}\label{g-property}
{\rm (see \cite{MT-JCAM-2004})} For ${\alpha}\in(1,2)$, the coefficients $g_{k}^{(\alpha)}$, $k=0,1,\dots$, satisfy
\begin{displaymath}
\left\{
\begin{aligned}
&g_{0}^{(\alpha)}=1, \ g_{1}^{(\alpha)}=-{\alpha}<0, \ g_{2}^{(\alpha)}>g_{3}^{(\alpha)}>\cdots>0,\\
&\sum\limits_{k=0}^{\infty}g_{k}^{(\alpha)}=0,\ \sum\limits_{k=0}^{n}g_{k}^{(\alpha)}<0,\ {\rm for}\ n\geq 1.
\end{aligned}
\right.
\end{displaymath}
\end{lemma}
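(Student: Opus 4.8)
The plan is to exploit the fact that $\{g_k^{(\alpha)}\}$ are exactly the Maclaurin coefficients of $(1-z)^\alpha$, i.e. $g_k^{(\alpha)}=(-1)^k\binom{\alpha}{k}$, which one verifies immediately from \eqref{gg} since the recursion multiplier there is $1-\frac{\alpha+1}{k}=-\frac{\alpha-k+1}{k}$, matching the binomial recurrence. All the sign and monotonicity claims then follow by tracking this multiplier $c_k:=1-\frac{\alpha+1}{k}$. First I would record the exact values $g_1^{(\alpha)}=-\alpha$ and $g_2^{(\alpha)}=c_2 g_1^{(\alpha)}=\frac{\alpha(\alpha-1)}{2}>0$. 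Then, for $k\ge 3$, since $\alpha+1\in(2,3)$ we have $\frac{\alpha+1}{k}\le\frac{\alpha+1}{3}<1$, so $0<c_k<1$; hence $g_k^{(\alpha)}=c_k\,g_{k-1}^{(\alpha)}$ preserves positivity and strictly shrinks the value. An induction starting from $g_2^{(\alpha)}>0$ then yields $g_2^{(\alpha)}>g_3^{(\alpha)}>\cdots>0$, which is the first line of the lemma.

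For the partial-sum statement I would pass to generating functions. Writing $G(z)=\sum_{k\ge0}g_k^{(\alpha)}z^k=(1-z)^\alpha$, the partial sums $S_n:=\sum_{k=0}^n g_k^{(\alpha)}$ are the coefficients of $G(z)/(1-z)=(1-z)^{\alpha-1}$, so $S_n=(-1)^n\binom{\alpha-1}{n}=g_n^{(\alpha-1)}$, the analogous coefficient with the shifted exponent $\beta:=\alpha-1\in(0,1)$. Running the same recursion analysis for $\beta$ gives $g_1^{(\beta)}=-\beta<0$, and for $n\ge 2$ the multiplier $1-\frac{\beta+1}{n}$ lies in $(0,1)$ because $\beta+1\in(1,2)$, so the sign is preserved: $g_n^{(\beta)}<0$ for all $n\ge1$. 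Hence $S_n=g_n^{(\alpha-1)}<0$ for $n\ge1$, which is the second displayed inequality.

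Finally, $\sum_{k=0}^\infty g_k^{(\alpha)}=\lim_{n\to\infty}S_n=\lim_{n\to\infty}g_n^{(\alpha-1)}$, so it remains to show this limit is $0$. This is the analytic heart of the argument and the step I expect to be the main obstacle, since everything else is elementary algebra on the recursion. I would establish it from the product representation $|g_n^{(\beta)}|=\beta\prod_{k=2}^n\bigl(1-\frac{\beta+1}{k}\bigr)$ for $n\ge2$: taking logarithms and using $\log(1-x)\le -x$ gives $\log|g_n^{(\beta)}|\le \log\beta-\sum_{k=2}^n\frac{\beta+1}{k}\to-\infty$, whence $|g_n^{(\beta)}|\to0$. (Equivalently one may invoke the asymptotics $g_k^{(\alpha)}\sim k^{-\alpha-1}/\Gamma(-\alpha)$, which also secure absolute convergence and, via Abel's theorem, identify the sum with $\lim_{z\to1^-}(1-z)^\alpha=0$.) This yields $\sum_{k=0}^\infty g_k^{(\alpha)}=0$ and completes the proof.
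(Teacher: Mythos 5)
Your proof is correct. Note that the paper itself does not prove \cref{g-property} at all: it is quoted directly from the reference \cite{MT-JCAM-2004}, so there is no in-paper argument to compare against. Your argument is the standard one behind that citation — identifying $g_k^{(\alpha)}=(-1)^k\binom{\alpha}{k}$ as the Maclaurin coefficients of $(1-z)^{\alpha}$ — and all the individual steps check out: the recursion multiplier $1-\frac{\alpha+1}{k}$ lies in $(0,1)$ for $k\ge 3$, giving the positivity and strict monotonicity from $g_2^{(\alpha)}=\frac{\alpha(\alpha-1)}{2}>0$; the identification $S_n=\sum_{k=0}^n g_k^{(\alpha)}=(-1)^n\binom{\alpha-1}{n}$ via division by $1-z$ is a clean way to get $S_n<0$ for $n\ge1$ by rerunning the same sign analysis with exponent $\beta=\alpha-1\in(0,1)$; and the divergence of $\sum_{k\ge 2}\frac{\beta+1}{k}$ in the log of the product representation correctly forces $S_n\to 0$, which is exactly the telescoping/limit argument needed for $\sum_{k=0}^{\infty}g_k^{(\alpha)}=0$. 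The only thing your write-up adds beyond what the paper uses is that it is self-contained, which is a strength rather than a defect here.
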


By applying \eqref{L-RL} and \eqref{R-RL} to the model equation \eqref{model-equation}, we obtain
\begin{equation}
-d\frac{\cafa}{h^{\alpha}}\left(\sum_{k=0}^{j+1}g_{k}^{(\alpha)}u(\eta_{j-k+1})+
\sum_{k=0}^{n-j+2}g_{k}^{(\alpha)}u(\eta_{j+k-1})\right)=y(\eta_{j})+\mO(h).
\end{equation}
Defining $u_{j}$ as the numerical approximation of $u(\eta_{j})$, setting $y_{j}=y(\eta_{j})$, and omitting the small term $\mO(h)$, the finite difference scheme for solving \eqref{model-equation} is constructed as follows
\begin{equation}\label{numerical scheme}
-d\frac{\cafa}{h^{\alpha}}\left(\sum_{k=0}^{j+1}g_{k}^{(\alpha)}u_{j-k+1}+
\sum_{k=0}^{n-j+2}g_{k}^{(\alpha)}u_{j+k-1}\right)=y_{j}, \quad {\rm for} \ j=1,\dots,n.
\end{equation}

Let $u=[u_{1},\dots,u_{n}]^{\tT}$, $y=[y_{1},\dots,y_{n}]^{\tT}$. Then, the numerical scheme \eqref{numerical scheme} can be simplified as the following matrix-vector form
\begin{equation}\label{linearsystem}
A_nu=y,
\end{equation}
with $A_n=w G_{n}^{(\alpha)}$, where $w=\frac{dc(\alpha)}{h^{\alpha}}>0$ and
\begin{equation}{\small
 G_{n}^{(\alpha)}=-\left[
\begin{array}{cccccc}
2g_{1}^{(\alpha)} & g_{0}^{(\alpha)}+g_{2}^{(\alpha)} & g_{3}^{(\alpha)} &\ddots &g_{n-1}^{(\alpha)} &g_{n}^{(\alpha)} \\
 g_{0}^{(\alpha)}+g_{2}^{(\alpha)} &2g_{1}^{(\alpha)} &g_{0}^{(\alpha)}+g_{2}^{(\alpha)} &g_{3}^{(\alpha)} &\ddots  &g_{n-1}^{(\alpha)}  \\
 \vdots  & g_{0}^{(\alpha)}+g_{2}^{(\alpha)} &2g_{1}^{(\alpha)} & \ddots & \ddots & \vdots \\
    \vdots  & \ddots    & \ddots & \ddots & \ddots &  g_{3}^{(\alpha)}\\
 g_{n-1}^{(\alpha)}  & \ddots & \ddots &\ddots & 2g_{1}^{(\alpha)} & g_{0}^{(\alpha)}+g_{2}^{(\alpha)} \\
g_{n}^{(\alpha)} & g_{n-1}^{(\alpha)}  & \cdots & \cdots & g_{0}^{(\alpha)}+g_{2}^{(\alpha)} &2g_{1}^{(\alpha)}\\
    \end{array}
  \right]. \label{A} }
\end{equation}
It is evident that the matrix $G_{n}^{(\alpha)}$ is a symmetric Toeplitz matrix, which is generated by an integrable real-value even function defined on the interval $[-\pi,\pi]$. In the following, the generating function of matrix $G_{n}^{(\alpha)}$ will be presented.

\begin{lemma}\label{generating-function}
{\rm(see \cite{PS-JSC-2016})} The generating function of matrix $G_{n}^{(\alpha)}$ is
\begin{equation}\label{g-form}
g_\alpha({\theta})=\left\{
\begin{array}{l} \vspace{1mm}
-2^{\alpha+1}(\sin\frac{-{\theta}}{2})^{\alpha}\cos[\frac{\alpha}{2}(\pi+{\theta})-{\theta}], \ {\theta}\in [-\pi,0), \\
-2^{\alpha+1}(\sin\frac{{\theta}}{2})^{\alpha}\cos[\frac{\alpha}{2}(\pi-{\theta})+{\theta}],\ \ {\theta}\in [0,\pi].
\end{array}
\right.
\end{equation}
\end{lemma}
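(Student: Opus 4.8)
The plan is to identify $g_\alpha$ as the boundary value on the unit circle of a fractional power symbol. First I would note that the recurrence \eqref{gg} is precisely the one obeyed by the binomial coefficients $(-1)^{k}\binom{\alpha}{k}$, so that $g_k^{(\alpha)}=(-1)^k\binom{\alpha}{k}$ and the generating power series of the weights is
\[
\sum_{k=0}^\infty g_k^{(\alpha)} z^k=(1-z)^\alpha,\qquad |z|\le 1,
\]
the series converging absolutely on $|z|=1$ because $\sum_{k}|g_k^{(\alpha)}|=|g_0^{(\alpha)}|+|g_1^{(\alpha)}|+\sum_{k\ge2}g_k^{(\alpha)}=1+\alpha+(\alpha-1)=2\alpha<\infty$, where $\sum_k g_k^{(\alpha)}=0$ and the signs of the $g_k^{(\alpha)}$ are those recorded in Lemma~\ref{g-property}.

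Next I would write $G_n^{(\alpha)}=-(L_n+R_n)$, where $L_n$ and $R_n$ are the lower- and upper-Hessenberg Toeplitz matrices produced by the shifted Gr\"{u}nwald stencils \eqref{L-RL} and \eqref{R-RL}; by construction $R_n=L_n^{\tT}$. Reading the left stencil off shows that the Toeplitz entry of $L_n$ at diagonal offset $p$ equals $g_{p+1}^{(\alpha)}$ for $p\ge -1$ and vanishes otherwise, so its generating function is $f_L(\theta)=\sum_{p\ge -1}g_{p+1}^{(\alpha)}e^{\bi p\theta}=e^{-\bi\theta}(1-e^{\bi\theta})^\alpha$, and correspondingly $f_R(\theta)=\overline{f_L(\theta)}=e^{\bi\theta}(1-e^{-\bi\theta})^\alpha$ for real $\theta$ (conjugation commutes with the principal power since $\mathrm{Re}(1-e^{\bi\theta})=1-\cos\theta\ge0$ stays off the branch cut). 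The one point I would check carefully is the bookkeeping on the three central diagonals: the offsets $p=0$ and $p=\pm1$ each collect a contribution from both $L_n$ and $R_n$, producing exactly the entries $2g_1^{(\alpha)}$ and $g_0^{(\alpha)}+g_2^{(\alpha)}$ displayed in \eqref{A}. This confirms that $G_n^{(\alpha)}$ is generated by $g_\alpha=-(f_L+f_R)=-2\,\Re f_L$.

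Finally I would reduce $-2\,\Re f_L$ to the stated trigonometric form. Restricting to $\theta\in(0,\pi]$ I would use the factorization $1-e^{\bi\theta}=2\sin\tfrac{\theta}{2}\,e^{\bi(\theta-\pi)/2}$, in which $\sin\tfrac{\theta}{2}>0$ and $(\theta-\pi)/2\in(-\tfrac{\pi}{2},0]$ lies inside the principal branch, so $(1-e^{\bi\theta})^\alpha=(2\sin\tfrac{\theta}{2})^\alpha e^{\bi\alpha(\theta-\pi)/2}$. Multiplying by $e^{-\bi\theta}$ and taking twice the real part collapses the two exponentials into one cosine and gives
\[
g_\alpha(\theta)=-2^{\alpha+1}\Big(\sin\tfrac{\theta}{2}\Big)^\alpha\cos\!\Big[\tfrac{\alpha}{2}(\theta-\pi)-\theta\Big],\qquad \theta\in(0,\pi].
\]
Since cosine is even, this coincides with the expression in \eqref{g-form}; the value at $\theta=0$ follows by continuity, and the branch $\theta\in[-\pi,0)$ follows from the evenness of $g_\alpha$ (equivalently, the real symmetry of $G_n^{(\alpha)}$). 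The main obstacle I anticipate is purely the phase and branch accounting: one must track the correct argument of $(1-e^{\pm\bi\theta})^\alpha$ and split according to the sign of $\sin\tfrac{\theta}{2}$, since a careless choice of branch would corrupt the cosine phase. By contrast, the absolute convergence of the symbol on the unit circle and the off-diagonal bookkeeping are routine once $\alpha\in(1,2)$ is invoked.
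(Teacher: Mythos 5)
Your derivation is correct. Note, however, that the paper offers no proof of this lemma at all: it is stated with a citation to Pang and Sun \cite{PS-JSC-2016}, so there is no in-paper argument to compare against. What you have written is essentially the standard derivation that lives in that reference and its relatives: identify $g_k^{(\alpha)}=(-1)^k\binom{\alpha}{k}$ from the recurrence \eqref{gg} so that the weights have symbol $(1-z)^{\alpha}$, split $G_n^{(\alpha)}=-(L_n+L_n^{\tT})$ according to the two shifted Gr\"{u}nwald stencils to get $g_\alpha=-2\,\Re\big[e^{-\bi\theta}(1-e^{\bi\theta})^{\alpha}\big]$, and then use $1-e^{\bi\theta}=2\sin\tfrac{\theta}{2}\,e^{\bi(\theta-\pi)/2}$ on $(0,\pi]$ together with evenness. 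I checked the details that actually carry risk here and they are all right: the offset bookkeeping reproduces the central entries $-2g_1^{(\alpha)}$ and $-(g_0^{(\alpha)}+g_2^{(\alpha)})$ of \eqref{A}; the absolute-convergence count $\sum_k|g_k^{(\alpha)}|=2\alpha$ follows from Lemma~\ref{g-property}; the argument $(\theta-\pi)/2$ stays in the principal branch; and $\cos[\tfrac{\alpha}{2}(\theta-\pi)-\theta]=\cos[\tfrac{\alpha}{2}(\pi-\theta)+\theta]$ by evenness of cosine, which is exactly the displayed form \eqref{g-form}. So your proposal supplies a complete, self-contained proof where the paper merely defers to the literature; the only thing it "costs" is length, and what it buys is that the reader can verify the phase in the cosine (on which Lemma~\ref{fractional-zeros-bounded} and everything downstream depends) without consulting \cite{PS-JSC-2016}.
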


By \cref{generating-function},   the generating function of $G_{n}^{(\alpha)}$ possesses the following properties, which will be exploited to further study.

\begin{lemma}\label{fractional-zeros-bounded}
For any ${\alpha}\in (1,2)$, it holds that
\begin{equation*}
\frac{1}{2}\le \frac{|\theta|^{\alpha}}{g_\alpha({\theta})}\le\frac{\pi^{2}}{-8\cos(\frac{\pi\alpha}{2})},\ where\ {\theta}\in[-\pi,\pi].
\end{equation*}
\end{lemma}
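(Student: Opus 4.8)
The plan is to use the evenness of $g_\alpha$ (it is the generating function of a symmetric Toeplitz matrix, hence real and even) to restrict attention to $\theta\in(0,\pi]$, handling $\theta=0$ afterwards by continuity. On this half-interval I would first rewrite the explicit formula from \Cref{generating-function}. Writing the cosine argument as $\frac{\alpha}{2}(\pi-\theta)+\theta=\frac{\alpha\pi}{2}+\frac{2-\alpha}{2}\theta$ and substituting $t=\theta/2\in(0,\pi/2]$, the common factor $2^{\alpha}$ in $\theta^{\alpha}=(2t)^{\alpha}$ and in $2^{\alpha+1}(\sin t)^{\alpha}$ cancels, reducing the quotient to
\[
\frac{|\theta|^{\alpha}}{g_\alpha(\theta)}=\frac{1}{2}\left(\frac{t}{\sin t}\right)^{\alpha}\frac{1}{-\cos\!\left(\tfrac{\alpha\pi}{2}+(2-\alpha)t\right)}.
\]
This turns the whole statement into the problem of controlling two scalar factors separately on $t\in(0,\pi/2]$.

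Next I would pin down the range of each factor. For the first, $\frac{\sin t}{t}$ is decreasing on $(0,\pi/2]$, so $\frac{t}{\sin t}$ increases from its limit $1$ at $t=0^{+}$ to $\frac{\pi}{2}$ at $t=\pi/2$; hence $1\le(t/\sin t)^{\alpha}\le(\pi/2)^{\alpha}$. For the second, set $\phi(t)=\frac{\alpha\pi}{2}+(2-\alpha)t$. Since $\alpha\in(1,2)$, $\phi$ increases linearly from $\frac{\alpha\pi}{2}\in(\frac{\pi}{2},\pi)$ at $t=0$ to exactly $\pi$ at $t=\pi/2$, so $\phi(t)$ never leaves $(\frac{\pi}{2},\pi]$, where $\cos$ is negative. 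This simultaneously confirms that $g_\alpha>0$ on $(0,\pi]$ (so the quotient is well defined and positive) and shows that $-\cos\phi(t)$ increases from $-\cos(\frac{\alpha\pi}{2})>0$ up to $1$, giving $1\le\frac{1}{-\cos\phi(t)}\le\frac{1}{-\cos(\alpha\pi/2)}$.

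With both factors bounded, the two inequalities follow. For the lower bound, each factor is at least $1$, so the product is $\ge1$ and the ratio is $\ge\frac12$. For the upper bound, I combine $(t/\sin t)^{\alpha}\le(\pi/2)^{\alpha}$ with $\frac{1}{-\cos\phi(t)}\le\frac{1}{-\cos(\alpha\pi/2)}$, and then invoke the elementary estimate $(\pi/2)^{\alpha}\le(\pi/2)^{2}=\pi^{2}/4$ (valid because $\pi/2>1$ and $\alpha<2$) to get
\[
\frac{|\theta|^{\alpha}}{g_\alpha(\theta)}\le\frac{1}{2}\cdot\frac{\pi^{2}}{4}\cdot\frac{1}{-\cos(\alpha\pi/2)}=\frac{\pi^{2}}{-8\cos(\alpha\pi/2)}.
\]
The remaining case $\theta=0$ follows by continuity, the limiting value being $\frac{1}{-2\cos(\alpha\pi/2)}$, which lies strictly between $\frac12$ and the upper bound; evenness then extends everything to $[-\pi,0)$.

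The genuinely delicate point is the sign and monotonicity analysis of the cosine factor: one must verify that $\phi(t)$ stays in the second quadrant $(\frac{\pi}{2},\pi]$ so that $-\cos\phi$ remains positive and attains its minimum $-\cos(\alpha\pi/2)$ precisely at the endpoint $t=0$ — this endpoint value is exactly what produces the $-\cos(\alpha\pi/2)$ in the upper bound. The estimate $(\pi/2)^{\alpha}\le\pi^{2}/4$ is what converts the $\alpha$-dependent factor into the clean constant $\pi^{2}/8$; the monotonicity of $t/\sin t$ and the cancellation of $2^{\alpha}$ are routine.
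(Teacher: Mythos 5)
Your proposal is correct and follows essentially the same route as the paper's proof: restrict to $\theta\in(0,\pi]$ by evenness and bound the two factors $\frac{1}{2}\bigl(\frac{\theta/2}{\sin(\theta/2)}\bigr)^{\alpha}\in(\frac12,\frac{\pi^2}{8}]$ and $\frac{-1}{\cos[\frac{\alpha}{2}(\pi-\theta)+\theta]}\in[1,\frac{-1}{\cos(\alpha\pi/2)}]$ separately. You merely supply the justifications the paper leaves implicit (monotonicity of $t/\sin t$ and the verification that the cosine argument stays in $(\frac{\pi}{2},\pi]$), plus an explicit limit at $\theta=0$, which is a harmless refinement rather than a different argument.
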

\begin{proof}
We only consider the case that ${\theta}\in (0,\pi]$ as it is an even function. From \cref{generating-function}, we have
\begin{equation*}
\frac{|{\theta}|^{\alpha}}{g_\alpha({\theta})}=-\frac{{\theta}^{\alpha}}{2^{\alpha+1}(\sin\frac{{\theta}}{2})
^{\alpha}\cos[\frac{\alpha}{2}(\pi-{\theta})+{\theta}]}.
\end{equation*}
Note that $1<{\alpha}<2$, it holds
\begin{equation*}
\frac{1}{2}<\frac{{\theta}^{\alpha}}{2^{\alpha+1}(\sin\frac{{\theta}}{2})^{\alpha}}=\frac{1}{2}
\frac{(\frac{{\theta}}{2})^{\alpha}}{(\sin\frac{{\theta}}{2})^{\alpha}}\le\frac{\pi^{2}}{8},
\end{equation*}
and
\begin{equation*}
1\le-\frac{1}{\cos[\frac{\alpha}{2}(\pi-{\theta})+{\theta}]}\le -\frac{1}{\cos(\frac{\pi\alpha}{2})}.
\end{equation*}
Similarly, we can derive the same conclusion for ${\theta}\in [-\pi,0)$ and the result is concluded.
\end{proof}


In the light of the definition of the fractional order zero defined in \cite{DMS-JCP-2016}, we deduce that the generating function $g_\alpha({\theta})$ has a zero of order ${\alpha}$ at ${\theta}=0$.

\subsection{Multi-dimensional Riesz fractional derivatives}
Now, we extend our investigation to the multi-dimensional cases as in \eqref{model-equation}.
To obtain the discretized form of multi-dimensional Riesz fractional diffusion equations, some notations are required.

Denote $I_{k}$ be a $k\times k$ identity matrix. Let $n_{i}^{-}=\prod\limits_{j=1}^{i-1}n_{j}$ and $n_{i}^{+}=\prod\limits_{j=i+1}^{m}n_{j}$ for $i=1,2,\dots,m$, respectively.
In particular, take $n_{1}^{-}=n_{m}^{+}=1$. Let $h_{i}=\frac{b_{i}-a_{i}}{n_{i}+1}$, for $i=1,\dots,m$, $\eta^i_j=a_i+jh_i$, and $y_{j_1,j_2,\ldots,j_m}=y(\eta^1_{j_1}, \eta^2_{j_2},\ldots,\eta^m_{j_m})$. Denote
\begin{equation*}
\bu=[u_{1,1,\ldots,1},\ldots,u_{n_1,1,\ldots,1},u_{1,2,\ldots,1},\ldots,\ldots,u_{n_1,n_2,\ldots,n_m}]^\tT\in \IR^N
\end{equation*}
and
\begin{equation*}
\by=[y_{1,1,\ldots,1},\ldots,y_{n_1,1,\ldots,1},y_{1,2,\ldots,1},\ldots,\ldots,y_{n_1,n_2,\ldots,n_m}]^\tT \in \IR^N.
\end{equation*}
Analogously, using the shifted Gr\"{u}nwald-Letnikov formula to discretize the multi-dimensional Riesz fractional diffusion equations \eqref{model-equation}, we obtain the matrix-vector form of the resulting linear system as
\begin{equation}\label{high-linearsystem}
 \bA\bu=\by,
\end{equation}
where
 \begin{equation}\label{high-A}
 {\bf A}=\sum_{i=1}^{m}I_{n_{i}^{-}}\otimes {A}_{n_i}\otimes I_{n_{i}^{+}},
 \end{equation}
in which ${A}_{n_i}=w_{i}G_{n_i}^{(\alpha_i)}$,  $w_{i}=\frac{d_{i}c({\alpha_i})}{h_{i}^{\alpha_i}}>0$, and $G_{n_i}^{(\alpha_i)}$ is defined as in \eqref{A}.
Moreover, the generating function of $\bA$ is as follows,
\begin{equation}\label{multi-functions}
f_{\bafa}(\btheta)=\sum_{i=1}^m w_ig_{\alpha_i}(\theta_i),
\end{equation}
where $\bafa=(\alpha_1,\ldots,\alpha_m)$.

Note that the generating function $g_{\alpha_i}({\theta_i})$ is nonnegative and hence so is $f_{\bafa}({\btheta})$, which manifests that the coefficient matrices $A_{n_i}$ and ${\bf A}$ are both symmetric positive definite. It is well-known that the CG method has been recognized as the most appropriate method for solving symmetric positive definite Toeplitz systems. Actually, since the generating functions have zeros, the corresponding matrices are  ill-conditioned, which will slow down the convergent rate of the CG method. In order to speed up the convergent rate, the preconditioning technique is employed to solve the ill-conditioned system. 
\section{Sine transform based preconditioners}\label{preconditioner1}
Let $T_{n}$ be a given symmetric Toeplitz matrix whose first column is $[t_{0},t_{1},\dots,t_{n-1}]^{\tT}$. Then, the natural ${\tau}$ matrix ${\tau}(T_{n})$ of $T_{n}$ can be determined by the Hankel correction\cite{BB-ACM-1990}
\begin{equation}\label{tauform}
\tau(T_{n})=T_{n}-H_{n},
\end{equation}
where $H_{n}$ is a Hankel matrix whose entries are constants along the antidiagonals, in which the antidiagonals are given as
\begin{equation*}
[t_{2},t_{3},\dots,t_{n-1},0,0,0,t_{n-1},\dots,t_{3},t_{2}]^{\tT}.
\end{equation*}
More preciously, the entries $p_{ij}$ of ${\tau}(T_{n})$ can be generalized as
\begin{equation}\label{tau-element}
p_{ij}=\left\{
\begin{aligned}
&t_{|j-i|}-t_{i+j}, && i+j<n-1,  \\
&t_{|j-i|}, && i+j=n-1,n,n+1, \\
&t_{|j-i|}-t_{2n+2-(i+j)}, && {\rm otherwise}.
\end{aligned}
\right.
\end{equation}

Notice that a ${\tau}$ matrix can be diagonalized by the sine transform matrix $S_n$   \cite{BC-LAA-1983}; i.e.,
\begin{equation}\label{diagonalized}
\tau(T_{n})=S_n{\Lambda_n}S_n,
\end{equation}
where $\Lambda_n$ is a diagonal matrix holding all the eigenvalues of $\tau(T_{n})$ and the entries of $S_n$ are given by
\begin{equation}\label{sine matrix element}
[S_n]_{j,k}=\sqrt{\frac{2}{n+1}}\sin\left(\frac{\pi j k}{n+1}\right), \ 1\leq k,j\leq n.
\end{equation}
Obviously, $S_n$ is a symmetric orthogonal matrix, and the matrix-vector multiplication $S_nv$ for any vector $v$ can be computed with only ${\mO}(n\log n)$ operations by the fast sine transform. Likewise, the matrix-vector product ${\tau}(T_{n})^{-1}v=S_n{\Lambda}_{n}^{-1}S_nv$ can be done in ${\mO(n \log n)}$ operations. Moreover, the eigenvalues of the ${\tau}$ matrix can be determined by its first column. Thus, $\mO(n)$ storage and $\mO({n\log n})$ computational complexity are required for saving and computing the eigenvalues of the ${\tau}$ matrix, respectively.

Now we consider the preconditioner matrix of the linear system (\ref{linearsystem}).
Recalling the form of the  coefficient matrix, we obtain the preconditioner as
\begin{equation}\label{preconditioner}
P_n={\tau}(A_n)=w{\tau}(G^{(\alpha)}_n),
\end{equation}
where $G^{(\alpha)}_n$ is the symmetric positive definite Toeplitz matrix defined in \eqref{A}.
For general ${\tau}$ matrix, we obtain its eigenvalues by the following lemma.
\begin{lemma}\label{eigenvalues_of_G}
{\rm(see \cite{BB-ACM-1990})} Let $T_n$ be a  symmetric Toeplitz matrix whose first column is $[t_{0},t_{1},\dots,t_{n-1}]^\tT$. Then the eigenvalues of ${\tau}(T_{n})$ can be expressed as
\begin{equation*}
{\sigma}_{j}=t_0+2\sum^{n-1}_{k=1}t_{k} \cos (j\zeta_k),\quad j=1,\ldots,n,
\end{equation*}
where $\zeta_k=\frac{\pi k}{n+1}$, $k=1,\dots,n-1$.
\end{lemma}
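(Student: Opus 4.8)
The plan is to verify directly that the vectors formed from the columns of the sine transform matrix $S_n$ are eigenvectors of $\tau(T_n)$, and then compute the associated eigenvalues by an explicit trigonometric summation. Write $s^{(j)}$ for the $j$-th eigenvector, whose $\ell$-th entry (up to the normalizing constant $\sqrt{2/(n+1)}$) is $\sin(\ell\,\zeta_j)$ with $\zeta_j=\frac{\pi j}{n+1}$. Since \eqref{diagonalized} asserts that $\tau(T_n)=S_n\Lambda_n S_n$ with $S_n$ symmetric orthogonal, the columns of $S_n$ are automatically the eigenvectors, so the only real content is identifying the eigenvalue $\sigma_j$. I would obtain $\sigma_j$ by computing the action of $\tau(T_n)$ on $s^{(j)}$ row by row and reading off the scalar multiple.

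First I would fix an interior row index and expand $\bigl(\tau(T_n)\,s^{(j)}\bigr)$ using the entry formula \eqref{tau-element}. The key algebraic fact driving the computation is the product-to-sum identity
\begin{equation*}
2\cos(k\zeta_j)\sin(\ell\zeta_j)=\sin\bigl((\ell+k)\zeta_j\bigr)+\sin\bigl((\ell-k)\zeta_j\bigr),
\end{equation*}
which converts the Toeplitz part $t_{|j-i|}$ into a shifted sum of sines. The Hankel correction terms $t_{i+j}$ and $t_{2n+2-(i+j)}$ are precisely what is needed so that the ``reflected'' boundary contributions cancel against the sine terms that would otherwise spill outside the index range $1,\dots,n$; this is the structural reason the $\tau$ algebra diagonalizes under the sine transform. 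After the cancellations, each row of $\tau(T_n)s^{(j)}$ collapses to $\sigma_j\sin(\ell\zeta_j)$ with the common factor
\begin{equation*}
\sigma_j=t_0+2\sum_{k=1}^{n-1}t_k\cos(j\zeta_k),
\end{equation*}
exactly the claimed expression.

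The step I expect to be the main obstacle is the careful boundary bookkeeping: verifying that the Hankel corrections in the three cases of \eqref{tau-element} (the regime $i+j<n-1$, the diagonal band $i+j=n-1,n,n+1$, and the regime $i+j>n+1$) combine so that every sine argument outside $\{1,\dots,n\}$ either vanishes (since $\sin(0)=\sin(\pi)=0$ at the endpoints, using $\zeta_j=\pi j/(n+1)$) or is exactly absorbed by a matching reflected term. This is the point where the specific form of the antidiagonal vector $[t_2,\dots,t_{n-1},0,0,0,t_{n-1},\dots,t_2]^\tT$ matters, and where a naive index shift would fail. Once that cancellation is checked for a generic row, the same computation applies uniformly to all rows, establishing that $s^{(j)}$ is an eigenvector with eigenvalue $\sigma_j$ for each $j=1,\dots,n$, which completes the proof.
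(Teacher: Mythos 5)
The paper does not prove this lemma at all: it is imported verbatim from Bini and Di Benedetto \cite{BB-ACM-1990}, so there is no in-paper argument to compare against. Your direct-verification strategy is the standard proof of this fact and is sound in outline: the columns of $S_n$ are the candidate eigenvectors, the product-to-sum identity $2\cos(k\zeta_j)\sin(\ell\zeta_j)=\sin((\ell+k)\zeta_j)+\sin((\ell-k)\zeta_j)$ turns the Toeplitz action into $\sigma_j\sin(\ell\zeta_j)$ plus spillover, and the spillover is removed by the Hankel correction via the antisymmetries $\sin((\ell-k)\zeta_j)=-\sin((k-\ell)\zeta_j)$ and $\sin((\ell+k)\zeta_j)=-\sin((2n+2-\ell-k)\zeta_j)$ (reflection about $0$ and about $n+1$, where $\sin(0\cdot\zeta_j)=\sin((n+1)\zeta_j)=0$). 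Carrying this out, the lower spillover $\sum_{k>\ell}t_k\sin((\ell-k)\zeta_j)=-\sum_{i\ge 1}t_{\ell+i}\sin(i\zeta_j)$ and the upper spillover reproduce exactly $-(H_n s^{(j)})_\ell$, so $\tau(T_n)s^{(j)}=\sigma_j s^{(j)}$ with $\sigma_j=t_0+2\sum_k t_k\cos(k\zeta_j)=t_0+2\sum_k t_k\cos(j\zeta_k)$ as claimed.

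One concrete warning for when you execute the bookkeeping you deferred: the case boundaries you quote from \eqref{tau-element} are inconsistent with the displayed antidiagonal vector $[t_2,\dots,t_{n-1},0,0,0,t_{n-1},\dots,t_2]^\tT$. With $1$-based indices the zero antidiagonals sit at $i+j=n,\,n+1,\,n+2$ (and the reflected entries at $i+j\ge n+3$), not at $i+j=n-1,n,n+1$; already for $n=3$ the printed conditions give $p_{11}=t_0$ instead of the correct $t_0-t_2$. The cancellation in your argument only closes with the corrected band, which is the one the reflection identities actually produce, so you should derive the Hankel entries from the antidiagonal vector rather than from the printed case split.
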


Lemma \ref{eigenvalues_of_G} provides a methodology to compute the eigenvalues of the $\tau$ matrix. Taking advantage of Lemma \ref{eigenvalues_of_G},  the preconditioner $P_n$  can be verified to be positive definite in the following lemma.

\begin{lemma}\label{tau-spd}
The preconditioner $P_n=w{\tau}(G^{(\alpha)}_n)$ defined in \eqref{preconditioner} is symmetric positive definite.
\end{lemma}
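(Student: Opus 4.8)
The plan is to split the statement into its two parts. Since $w>0$, it suffices to analyze $\tau(G_n^{(\alpha)})$. Symmetry is immediate: by \eqref{diagonalized} we have $\tau(G_n^{(\alpha)})=S_n\Lambda_n S_n$ with $S_n$ real, symmetric, and orthogonal, so $\tau(G_n^{(\alpha)})$ is symmetric, and hence so is $P_n=w\tau(G_n^{(\alpha)})$. The whole statement therefore reduces to showing that every eigenvalue $\sigma_j$ of $\tau(G_n^{(\alpha)})$ is strictly positive.

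First I would read the first column of $G_n^{(\alpha)}$ off from \eqref{A}, namely $t_0=-2g_1^{(\alpha)}$, $t_1=-(g_0^{(\alpha)}+g_2^{(\alpha)})$, and $t_k=-g_{k+1}^{(\alpha)}$ for $2\le k\le n-1$, and feed these into \cref{eigenvalues_of_G}. After substituting, splitting the $g_2^{(\alpha)}\cos\zeta_j$ contribution out of $t_1$, merging it into the tail sum, and relabelling $l=k+1$, the expression collapses to
\begin{equation*}
\sigma_j = 2\alpha - 2\cos\zeta_j - 2\sum_{l=2}^{n} g_l^{(\alpha)}\cos\big((l-1)\zeta_j\big),\qquad \zeta_j=\frac{\pi j}{n+1},
\end{equation*}
where I have used $g_0^{(\alpha)}=1$ and $g_1^{(\alpha)}=-\alpha$ from \cref{g-property}.

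The key step is then a short lower bound. Because $g_l^{(\alpha)}>0$ for $l\ge 2$ and $\cos((l-1)\zeta_j)\le 1$, the tail satisfies $-2\sum_{l=2}^{n}g_l^{(\alpha)}\cos((l-1)\zeta_j)\ge -2\sum_{l=2}^{n}g_l^{(\alpha)}$; and from $\sum_{k=0}^{n}g_k^{(\alpha)}<0$ in \cref{g-property} together with $g_0^{(\alpha)}+g_1^{(\alpha)}=1-\alpha$ I get $\sum_{l=2}^{n}g_l^{(\alpha)}<\alpha-1$. Combining the two bounds gives $\sigma_j > 2\alpha-2\cos\zeta_j-2(\alpha-1)=2(1-\cos\zeta_j)$. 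Since $j\in\{1,\dots,n\}$ forces $\zeta_j\in(0,\pi)$, we have $\cos\zeta_j<1$ and hence $\sigma_j>0$ for every $j$, so $\tau(G_n^{(\alpha)})$, and therefore $P_n$, is positive definite.

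I expect the only delicate point to be the bookkeeping in the second paragraph: correctly extracting the Toeplitz entries $t_k$ from \eqref{A} (note the index shift between $t_k$ and $g_{k+1}^{(\alpha)}$, and the exceptional entry $t_1$) and recombining the cosine sum so that the clean telescoped form emerges. Once that reorganization is in place, positivity follows purely from the sign pattern and the partial-sum identity in \cref{g-property}, with no size-dependent constants, so no further estimate is required.
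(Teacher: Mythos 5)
Your proposal is correct and follows essentially the same route as the paper: both read off the first column of $G_n^{(\alpha)}$, apply \cref{eigenvalues_of_G}, bound every cosine by $1$ using the positivity of $g_k^{(\alpha)}$ for $k\ge 2$, and invoke the partial-sum inequality $\sum_{k=0}^{n}g_k^{(\alpha)}<0$ from \cref{g-property}. The only cosmetic difference is that you keep the $-2\cos\zeta_j$ term separate to land on the slightly sharper bound $\sigma_j>2(1-\cos\zeta_j)$, whereas the paper bounds all cosines at once and concludes directly from $\delta_j\ge -2\sum_{k=0}^{n}g_k^{(\alpha)}>0$.
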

\begin{proof}
The first column of $ G^{(\alpha)}_n $ in \eqref{A} is
\begin{equation*}
\left[-2g_{1}^{(\alpha)},-g_{0}^{(\alpha)}-g_{2}^{(\alpha)},-g_{3}^{(\alpha)},\dots, -g_{n}^{(\alpha)}\right]^{\tT}.
\end{equation*}
According to \cref{eigenvalues_of_G}, the $j$th eigenvalue  of $\tau(G^{(\alpha)}_n)$ can be expressed as
 \begin{eqnarray*}
 {\delta}_{j}     &=& -2\gafa_1-2\left(\gafa_0+\gafa_2\right)\cos\left(\frac{\pi j }{n+1}\right)-2\sum_{k=3}^{n}\gafa_k\cos\left(\frac{\pi j(k-1)}{n+1}\right)  \\
     &\ge&  -2\gafa_1 - 2\left(\gafa_0+\gafa_2\right)-2\sum_{k=3}^n \gafa_k   \\
      &=& -2\sum^{n }_{k=0}\gafa_k > 0. \qquad {\rm (by \, \cref{g-property})}
  \end{eqnarray*}
   Therefore, we deduce that $\tau(G^{(\alpha)}_n)$ is symmetric positive definite and conclude that so is $P_n$.
\end{proof}

Making use of \cref{tau-spd}, we derive the following corollary.
\begin{corollary}
The preconditioner $P_n$ defined in \eqref{preconditioner} is invertible.
\end{corollary}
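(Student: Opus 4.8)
The plan is to obtain invertibility as an immediate consequence of the symmetric positive definiteness already established in \cref{tau-spd}. The governing fact is that a real symmetric positive definite matrix is necessarily nonsingular: by the spectral theorem its eigenvalues are all real, and the defining inequality $v^{\tT}P_nv>0$ for every nonzero $v$ forces each eigenvalue to be strictly positive, so $0$ cannot belong to its spectrum. Since a square matrix is invertible precisely when $0$ is not one of its eigenvalues, the conclusion follows at once.

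Concretely, I would argue as follows. By \cref{tau-spd}, $P_n=w{\tau}(G^{(\alpha)}_n)$ is symmetric positive definite; indeed, the proof of that lemma already exhibits the eigenvalues of ${\tau}(G^{(\alpha)}_n)$ as ${\delta}_j\ge -2\sum_{k=0}^{n}\gafa_k>0$, and since $w>0$ the eigenvalues of $P_n$ are $w{\delta}_j>0$ for every $j=1,\dots,n$. Consequently $\det P_n=\prod_{j=1}^{n}w{\delta}_j>0$, so $P_n$ is nonsingular and therefore invertible.

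There is essentially no obstacle here: the corollary is a formal consequence of positive definiteness, and the only point to record is that the strict positivity of the eigenvalues --- already obtained quantitatively in the preceding lemma --- rules out a zero eigenvalue. I would simply cite \cref{tau-spd} together with the elementary linear-algebra fact relating nonsingularity to the absence of a zero eigenvalue, and no further computation is needed.
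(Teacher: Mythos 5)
Your proposal is correct and matches the paper's reasoning: the paper derives this corollary directly from \cref{tau-spd} (positive definiteness implies strictly positive eigenvalues, hence nonsingularity), giving no further argument. Your quantitative remark that the eigenvalues $w\delta_j$ are bounded below by $-2w\sum_{k=0}^{n}g_k^{(\alpha)}>0$ is a faithful restatement of what that lemma's proof already establishes.
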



Now we consider the $\tau$-preconditioner for multi-level Toeplitz matrices. Recall the coefficient matrix
\begin{equation*}
{\bf A}=\sum_{i=1}^{m}I_{n_{i}^{-}}\otimes {A}_{n_i}\otimes I_{n_{i}^{+}}.
\end{equation*}
The preconditioner matrix of the linear system (\ref{high-linearsystem}) can be expressed as
\begin{equation}\label{high-preconditioner}
 {\bf P}=\sum_{i=1}^{m}{I}_{n_{i}^{-}}\otimes {\tau({A}_{n_i})}\otimes {I}_{n_{i}^{+}},
\end{equation}
where ${\tau}({A}_{n_i})=w_{i}\tau(G_{n_i}^{(\alpha_i)})$ for $i=1,\dots,m$.
Denote
\begin{equation}\label{Ai}
{A}_{i}={I}_{n_{i}^{-}}\otimes {A}_{n_i}\otimes {I}_{n_{i}^{+}},
\end{equation}
and the corresponding $\tau$-matrix can be defined as ${\tau}_{1}({ A}_{i})={I}_{n_{i}^{-}}\otimes {\tau({A}_{n_i})}\otimes {I}_{n_{i}^{+}}$.
\begin{lemma}
The preconditioner ${\bf P}$ defined in (\ref{high-preconditioner}) is symmetric positive definite.
\end{lemma}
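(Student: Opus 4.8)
The plan is to verify the two defining properties of a symmetric positive definite matrix---symmetry and positive definiteness---separately, reducing each to the single-level result already established in \cref{tau-spd} together with elementary facts about Kronecker products. Throughout I write $B_i := I_{n_i^-}\otimes \tau(A_{n_i})\otimes I_{n_i^+}$, so that $\mathbf{P}=\sum_{i=1}^m B_i$ by \eqref{high-preconditioner}.

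First I would dispose of symmetry. By \cref{tau-spd} each factor $\tau(A_{n_i})=w_i\tau(G_{n_i}^{(\alpha_i)})$ is symmetric, and the identity blocks $I_{n_i^-}$, $I_{n_i^+}$ are trivially symmetric. Since the Kronecker product of symmetric matrices is symmetric (because $(X\otimes Y\otimes Z)^{\tT}=X^{\tT}\otimes Y^{\tT}\otimes Z^{\tT}$), every summand $B_i$ is symmetric; a finite sum of symmetric matrices is symmetric, so $\mathbf{P}=\mathbf{P}^{\tT}$.

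Next I would establish positive definiteness. The key is that each $B_i$ is itself symmetric positive definite: \cref{tau-spd} shows $\tau(A_{n_i})$ has strictly positive eigenvalues, the identity factors have all eigenvalues equal to $1$, and the spectrum of a Kronecker product is the set of products of the eigenvalues of the factors; hence every eigenvalue of $B_i$ is a product of three positive numbers and is therefore positive. Consequently, for any nonzero $\bx\in\IR^N$ one has $\bx^{\tT}B_i\bx>0$ for each $i$, and summing over $i=1,\dots,m$ gives $\bx^{\tT}\mathbf{P}\bx=\sum_{i=1}^m \bx^{\tT}B_i\bx>0$. This proves that $\mathbf{P}$ is positive definite.

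There is no serious obstacle in this argument; the only point that must be invoked carefully is the spectral rule for Kronecker products (equivalently, the preservation of the sign of the quadratic form), and one should keep the tensor ordering $n_i^-$, $n_i^+$ consistent with the definitions \eqref{high-A}--\eqref{high-preconditioner}. Everything else follows at once from \cref{tau-spd}, so the lemma reduces entirely to the one-dimensional case already treated.
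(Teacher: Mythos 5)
Your proof is correct and follows essentially the same route as the paper: both establish that each Kronecker summand $I_{n_i^-}\otimes\tau(A_{n_i})\otimes I_{n_i^+}$ is symmetric positive definite by combining \cref{tau-spd} with the spectral behaviour of Kronecker products, and then conclude by noting that a sum of symmetric positive definite matrices is symmetric positive definite. The only cosmetic difference is that the paper realizes the Kronecker eigenvalue argument via the explicit sine-transform diagonalization $\bS(I_{n_i^-}\otimes\Lambda_{n_i}\otimes I_{n_i^+})\bS$, whereas you invoke the general eigenvalue-product rule directly.
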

\begin{proof}
Let ${\Lambda}_{n_i}$ be a diagonal matrix holding the eigenvalues of ${\tau}({A}_{n_i})$. Then
  ${\tau}({A}_{n_i})=S_{n_i}{\Lambda_{n_i}}S_{n_i}$.
 \cref{tau-spd} has shown that all the eigenvalues of ${\tau}(G_{n_i}^{(\alpha_i)})$ are positive; that is, ${\tau}({A}_{n_i})$ for $i=1,2\dots,m$ are positive definite. By virtue of the properties of Kronecker product, it is easy to obtain
\begin{equation*}
{I}_{n_{i}^{-}}\otimes {\tau({A}_{n_i})}\otimes {I}_{n_{i}^{+}}=\bS (I_{n_{i}^{-}}\otimes {\Lambda}_{n_i}\otimes I_{n_{i}^{+}})\bS ,
\end{equation*}
where $\bS =\bigotimes\limits^m_{i=1}S_{n_i}$,
which manifests that all the eigenvalues of ${\tau}_{1}(A_{i})$ are positive. As ${\bf P}$ is the summation of symmetric positive definite matrices, it follows that ${\bf P}$ is also symmetric positive definite.
\end{proof}

\section{Spectral analysis for preconditioned matrices}\label{spectrumanalysis}

In this section, we discuss the spectra of the preconditioned matrices.

\subsection{One dimensional case}\label{1spectrum}

In the following, we discuss the spectrum of ${\tau}(\Gafa_n)^{-1}H_n$, which is needed in the theoretical analysis.

First of all, we focus on the matrix ${\tau}(\Gafa_n)=\Gafa_n-H_n$, where $H_n$ is the Hankel matrix as in \eqref{tauform}, and $G_{n}^{(\alpha)}$ is defined in \eqref{A}. Denote the first column of ${\Gafa_n}$ as
\begin{equation}\label{firstcolumnG}
[t_{0},t_{1},\dots,t_{n-1}]^{\tT}=[-2g_{1}^{(\alpha)},-(g_{0}^{(\alpha)}+g_{2}^{(\alpha)}),\dots,-g_{n }^{(\alpha)}]^{\tT}.
\end{equation}
By \cref{g-property}, we have
\begin{lemma}\label{g_property_later}
Let $t_j, \ j=0,1,\ldots,n-1$, be defined in \eqref{firstcolumnG}. Then
\begin{equation*}
t_0>0,\, t_1<t_2<\cdots<t_{n-1}<0,
\end{equation*}
and
\begin{equation*}
t_0+2\sum^{n-1}_{j=1}t_j>0.
\end{equation*}
\end{lemma}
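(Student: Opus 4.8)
The plan is to rewrite each $t_j$ in terms of the Gr\"unwald--Letnikov coefficients $g_k^{(\alpha)}$ and then read off every assertion directly from \cref{g-property}. Comparing \eqref{firstcolumnG} with the first column of \eqref{A}, we have $t_0=-2g_1^{(\alpha)}$, $t_1=-(g_0^{(\alpha)}+g_2^{(\alpha)})$, and $t_j=-g_{j+1}^{(\alpha)}$ for $2\le j\le n-1$. Once this dictionary is in place, all three claims become one-line consequences of the sign, monotonicity, and summation properties of $g_k^{(\alpha)}$.

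For the first claim, I would simply note $t_0=-2g_1^{(\alpha)}=2\alpha>0$, since $g_1^{(\alpha)}=-\alpha$. For the sign-and-ordering statement, the argument splits into the negativity of all $t_j$ ($j\ge1$) and the strict increase. Because $g_2^{(\alpha)}>g_3^{(\alpha)}>\cdots>0$, each $t_j=-g_{j+1}^{(\alpha)}$ with $j\ge2$ is negative, and $t_1=-(1+g_2^{(\alpha)})<0$ as well, giving $t_j<0$ for every $j\ge1$. The chain $t_2<t_3<\cdots<t_{n-1}$ is exactly the reversal of $g_3^{(\alpha)}>g_4^{(\alpha)}>\cdots>g_n^{(\alpha)}$, again from \cref{g-property}. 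The only separate point is $t_1<t_2$, i.e. $1+g_2^{(\alpha)}>g_3^{(\alpha)}$, which follows from $g_2^{(\alpha)}>g_3^{(\alpha)}$ together with $1>0$.

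For the last inequality, the key step is to telescope the partial sum. Substituting the expressions above and reindexing the tail sum by $k=j+1$,
\[
t_0+2\sum_{j=1}^{n-1}t_j = -2g_1^{(\alpha)}-2\bigl(g_0^{(\alpha)}+g_2^{(\alpha)}\bigr)-2\sum_{k=3}^{n}g_k^{(\alpha)} = -2\sum_{k=0}^{n}g_k^{(\alpha)},
\]
where the last equality merely collects the $k=0,1,2$ terms with the tail. Since $\sum_{k=0}^{n}g_k^{(\alpha)}<0$ for $n\ge1$ by \cref{g-property}, the right-hand side is strictly positive, which is the claimed bound.

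I do not expect a genuine obstacle here: every inequality reduces to \cref{g-property}, and the only place needing a little care is the bookkeeping in the reindexing $\sum_{j=2}^{n-1}g_{j+1}^{(\alpha)}=\sum_{k=3}^{n}g_k^{(\alpha)}$ that folds the partial sum into $-2\sum_{k=0}^{n}g_k^{(\alpha)}$. I would also remark that this final identity is precisely the lower bound $\delta_j\ge-2\sum_{k=0}^{n}g_k^{(\alpha)}$ already used in the proof of \cref{tau-spd}, so the present lemma records, now in terms of the symbol $t_j$, the same positivity exploited there.
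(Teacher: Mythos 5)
Your proposal is correct and follows exactly the route the paper intends: the paper's own proof is the one-line remark that the results follow from \eqref{gg} and \cref{g-property}, and your argument simply fills in those details (the identification $t_0=-2g_1^{(\alpha)}$, $t_1=-(g_0^{(\alpha)}+g_2^{(\alpha)})$, $t_j=-g_{j+1}^{(\alpha)}$ for $j\ge 2$, the sign and monotonicity facts, and the telescoping to $-2\sum_{k=0}^{n}g_k^{(\alpha)}>0$). No discrepancy with the paper's approach.
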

\begin{proof}
Using \eqref{gg} and \cref{g-property}, the results are immediately concluded.
\end{proof}

 Let   $h_{ij}$ and $p_{ij}$ be the entries of  $H_n$ and ${\tau}(\Gafa_n)$, respectively. From Lemma \ref{g_property_later}, we have
 \begin{equation}\label{H-entries}
h_{ij}=\left\{
\begin{array}{ll}
 t_{i+j}, & i+j<n-1,  \\
0,  &i+j=n-1,n,n+1,  \\
t_{2n+2-(i+j)}, &{\rm otherwise},
\end{array}
\right.
\end{equation}
and
\begin{equation}\label{tau-entries}
  p_{ij}=t_{|i-j|}-h_{ij}.
\end{equation}
It is obvious that $h_{ij}\leq 0$ for $1\le i,j\le n$. By \cref{g_property_later}, we will show that $p_{ij}$ possesses the following properties.

\begin{lemma}\label{rm2}
Let $p_{ij}$ be the entries of ${\tau}(G_{n}^{(\alpha)})$ defined in \eqref{tau-entries}. For $1\le i,j\le n$, it holds that
\begin{equation}\label{sign}
 p_{ii}>0, \quad {\rm and}\quad  p_{ij}<0, \, {\rm for}\quad i\neq j.
\end{equation}
\end{lemma}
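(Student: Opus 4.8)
The plan is to reduce everything to the single identity $p_{ij}=t_{|i-j|}-h_{ij}$ from \eqref{tau-entries}, and then feed in the two structural facts already available: by \cref{g_property_later} the sequence satisfies $t_0>0$ together with the strict chain $t_1<t_2<\cdots<t_{n-1}<0$, so $(t_k)$ is strictly increasing and strictly negative on the index range $\{1,\dots,n-1\}$; and by \eqref{H-entries} every Hankel entry is either $0$ or equals some $t_k$ with $2\le k\le n-1$, whence $h_{ij}\le 0$ for all $1\le i,j\le n$. With this in hand the claim becomes a comparison between the ``distance index'' $|i-j|$ and whichever index of $t$ (or the value $0$) produces $h_{ij}$.

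The diagonal case is immediate: when $i=j$ we have $p_{ii}=t_0-h_{ii}$, and since $t_0>0$ while $h_{ii}\le 0$, we get $p_{ii}\ge t_0>0$. For the off-diagonal entries I would use the symmetry of $\tau(G_n^{(\alpha)})$ to assume $i<j$, so that $|i-j|=j-i\ge 1$ and hence $t_{|i-j|}<0$, and then split according to the three cases of \eqref{H-entries}. If $h_{ij}=0$, then $p_{ij}=t_{|i-j|}<0$ with nothing further to do. If $h_{ij}=t_{i+j}$, I would note $i+j-(j-i)=2i\ge 2>0$, so $|i-j|<i+j$ with both indices in the strictly increasing range of $(t_k)$; therefore $t_{|i-j|}<t_{i+j}=h_{ij}$ and $p_{ij}<0$. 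In the reflected case $h_{ij}=t_{2n+2-(i+j)}$ one checks $|i-j|<2n+2-(i+j)$, which rearranges to $2j<2n+2$, i.e.\ $j\le n$, always true; monotonicity again gives $t_{|i-j|}<t_{2n+2-(i+j)}=h_{ij}$, so $p_{ij}<0$.

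The conceptual core is the uniform statement ``$|i-j|$ is strictly smaller than the sum/reflection index defining $h_{ij}$'': combined with the strict monotonicity and negativity of $(t_k)$ on $\{1,\dots,n-1\}$, this forces $t_{|i-j|}<h_{ij}$ in every off-diagonal case and hence $p_{ij}<0$. I expect the only real (and still modest) obstacle to be the bookkeeping at the boundary antidiagonals: one must confirm that in the reflected case the index $2n+2-(i+j)$ indeed lands in the monotone range $\{1,\dots,n-1\}$, so that the inequality $t_{|i-j|}<t_{2n+2-(i+j)}$ is legitimate rather than referencing an undefined or sign-changing value of $t$. Once the index ranges in the three Hankel cases are pinned down, the rest is a direct application of \cref{g_property_later}.
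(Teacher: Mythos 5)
Your argument is correct and is essentially the proof given in the paper: the diagonal case is handled there by $t_0>0$ together with $h_{ii}\le 0$, and for $1\le i<j\le n$ the paper likewise verifies $|i-j|<i+j$ and $|i-j|<2n+2-(i+j)$ and then invokes the monotonicity $t_1<t_2<\cdots<t_{n-1}<0$ from \cref{g_property_later} to conclude $t_{|i-j|}-h_{ij}<0$. Your version is marginally more careful than the paper's (you treat the zero antidiagonals $h_{ij}=0$ explicitly and flag the need to confirm that the reflected index $2n+2-(i+j)$ stays in the range where the monotone chain of \cref{g_property_later} applies), but the route is the same.
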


\begin{proof}
It is evident that $p_{ii}=t_0-h_{ii}>0$ for each $i=1,\dots,n$.
On the other hand, let $1\le i<j\le n$. It is easy to check that
\begin{equation*}
|i-j|<i+j, \ {\rm and} \quad |i-j|=|(n-i)-(n-j)|<2n-(i+j)+2.
\end{equation*}
By \cref{g_property_later} and formulas \eqref{H-entries}--\eqref{tau-entries}, we derive that
\begin{equation*}
p_{ij}=t_{|i-j|}-t_{i+j}(\, {\rm or} \, t_{|i-j|}-t_{2n-(i+j)+2})<0.
\end{equation*}
The proof is complete.
\end{proof}


\begin{lemma}\label{tau-HC}
Let $\Gafa_n$ be the Toeplitz matrix defined in \eqref{A}, and $H_n$ be the corresponding Hankel matrix whose entries are defined in \eqref{H-entries}.  Then, the eigenvalues of $\tau(\Gafa_n)^{-1}H_n$ fall inside the open interval $(-1/2,1/2)$.
\end{lemma}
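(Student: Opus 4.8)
The plan is to first turn the two-sided spectral containment into Loewner inequalities, then collapse it to a single one using the M-matrix structure already prepared by \cref{rm2}, and finally to establish that one inequality by a quadratic-form argument.

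\emph{Reduction.} Since $\tau(\Gafa_n)$ is symmetric positive definite (\cref{tau-spd}) and $H_n$ is symmetric, the matrix $\tau(\Gafa_n)^{-1}H_n$ is similar to the symmetric matrix $\tau(\Gafa_n)^{-1/2}H_n\tau(\Gafa_n)^{-1/2}$, so every eigenvalue $\mu$ is real. Requiring all $\mu\in(-1/2,1/2)$ is, in the Loewner order, equivalent to $-\tfrac12\tau(\Gafa_n)\prec H_n\prec\tfrac12\tau(\Gafa_n)$; substituting $\tau(\Gafa_n)=\Gafa_n-H_n$ turns this into the pair $\Gafa_n+H_n\succ0$ and $\Gafa_n-3H_n\succ0$.

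\emph{Collapsing to one inequality.} By \cref{rm2} the off-diagonal entries of $\tau(\Gafa_n)$ are negative while its diagonal is positive, and by \cref{tau-spd} it is positive definite; hence $\tau(\Gafa_n)$ is a Stieltjes matrix and $\tau(\Gafa_n)^{-1}\ge0$ entrywise. Because $h_{ij}\le0$, the product $\tau(\Gafa_n)^{-1}H_n$ is entrywise nonpositive, so $-\tau(\Gafa_n)^{-1}H_n$ is a nonnegative matrix; by the Perron--Frobenius theorem its spectral radius $\rho$ is an eigenvalue, and since the spectrum is real it lies in $[-\rho,\rho]$ and attains $-\rho$. Thus $\max_{\mu}|\mu|=-\min_{\mu}\mu=\rho$, so the upper bound $\mu<1/2$ comes for free once the lower bound $\mu>-1/2$ holds. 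This collapses the whole lemma to the single inequality $\Gafa_n+H_n\succ0$; in fact it even suffices to check $x^{\tT}(\Gafa_n+H_n)x>0$ for the nonnegative Perron eigenvector $x$.

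\emph{The main inequality.} It remains to prove $\Gafa_n+H_n\succ0$, which I regard as the crux. I would attack it through quadratic forms: writing $q_v(\theta)=\sum_{j=1}^n v_je^{\bi j\theta}$, the generating function (\cref{generating-function}) gives $v^{\tT}\Gafa_n v=\frac{1}{2\pi}\int_{-\pi}^{\pi}g_\alpha(\theta)|q_v(\theta)|^2\,d\theta$, and the analogous computation for the \emph{full} forward Hankel matrix $(t_{i+j})_{i,j}$ yields the clean identity $\frac{1}{2\pi}\int_{-\pi}^{\pi}g_\alpha(\theta)\bigl(|q_v|^2+\mathrm{Re}(q_v^2)\bigr)\,d\theta=\frac{1}{\pi}\int_{-\pi}^{\pi}g_\alpha(\theta)(\mathrm{Re}\,q_v)^2\,d\theta\ge0$, using only $g_\alpha\ge0$. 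Exploiting the reflection symmetry of the Hankel correction, namely that each antidiagonal of $H_n$ is symmetric about its centre so that $H_n$ splits as a truncated forward Hankel plus its flip $JH^{(1)}J$ under the exchange matrix $J$, reduces the claim to comparing a \emph{truncated} forward-Hankel form with $g_\alpha$. The main obstacle is precisely this finite-section truncation: the displayed nonnegativity is exact only for the untruncated forward Hankel, and one must show that discarding the tail antidiagonals does not destroy positive definiteness. This is where the sign and strict monotonicity of the coefficients, $t_1<t_2<\cdots<t_{n-1}<0$ together with $t_0+2\sum_{j\ge1}t_j>0$ from \cref{g-property} and \cref{g_property_later}, have to be invoked to control the discarded terms, with the pointwise comparison $\tfrac12\le|\theta|^{\alpha}/g_\alpha(\theta)$ of \cref{fractional-zeros-bounded} as the natural device for converting those coefficient estimates into the final bound.
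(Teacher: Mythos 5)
Your reduction is sound as far as it goes. The similarity to a symmetric matrix, the equivalence of ``all eigenvalues in $(-1/2,1/2)$'' with the Loewner pair $\Gafa_n+H_n\succ0$ and $\Gafa_n-3H_n\succ0$, and the Perron--Frobenius collapse (using that $\tau(\Gafa_n)$ is a Stieltjes matrix by \cref{rm2} and \cref{tau-spd}, $H_n\le 0$ entrywise, hence the extreme eigenvalue of $\tau(\Gafa_n)^{-1}H_n$ is the negative one) are all correct, and that collapse is a genuinely nice observation not present in the paper. But the proof stops at the crux. You yourself label the finite-section truncation of the forward-Hankel quadratic form as ``the main obstacle'' and say the coefficient estimates ``have to be invoked to control the discarded terms''; that is a description of a difficulty, not an argument. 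The Fourier identity you display covers only the untruncated forward Hankel form, which is not the matrix $H_n$ of \eqref{H-entries}, so $\Gafa_n+H_n\succ0$ remains unproved and the lemma is not established.

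The missing step closes much more cheaply than your sketch suggests, by essentially the computation the paper performs in a different packaging. The matrix $\Gafa_n+H_n=\tau(\Gafa_n)+2H_n$ has entries $t_{|i-j|}+h_{ij}$, hence strictly negative off-diagonal entries and positive diagonal by \cref{g_property_later}, and its $k$-th row sum equals
\begin{equation*}
t_0+\sum_{j\neq k}t_{|k-j|}+\sum_{j=1}^{n}h_{kj}\;\ge\; t_0+2\sum_{j=1}^{n-1}t_j\;>\;0,
\end{equation*}
which for a symmetric matrix with nonpositive off-diagonals is exactly strict diagonal dominance, hence positive definiteness. The paper reaches the same row-sum inequality through a componentwise eigenvector (Gershgorin-type) estimate that delivers both bounds $|\lambda|<1/2$ at once, without Perron--Frobenius, the Loewner order, or any Fourier-integral machinery; your route becomes a legitimate (and arguably more structural) alternative only once this last step is supplied, and the quantitative inequality it ultimately rests on is the same $t_0+2\sum_{j=1}^{n-1}t_j>0$, not the pointwise bound of \cref{fractional-zeros-bounded}.
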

\begin{proof}
Let ${\lambda}$ be the eigenvalue of ${\tau}(\Gafa_n)^{-1}H_n$, and $z=[z_{1},z_{2},\dots,z_{n}]^{\tT}$ be the corresponding eigenvector with the largest component having the magnitude $1$; i.e., $\max\limits_{1\le j \le n} {|z_j|}=1$. Then, we have
\begin{equation*}
 H_nz={\lambda}{\tau}(\Gafa_n)z.
\end{equation*}
 For each $i$, it holds that
 \begin{equation*}
 \sum_{j=1}^{n}h_{ij}z_{j}={\lambda}\sum\limits_{j=1}^{n}p_{ij}z_{j},
\end{equation*}
 which can be written as
 \begin{equation*}
{\lambda}p_{ii}z_{i}=\sum\limits_{j=1}^{n}h_{ij}z_{j}-{\lambda}\sum\limits_{j=1,j\neq i}^{n}p_{ij}z_{j}.
\end{equation*}
Let $|z_{k}|=1$. Then, by the above formula, it follows
\begin{equation*}
|{\lambda}||p_{kk}|\leq \sum\limits_{j=1}^{n}|h_{kj}|+|{\lambda}|\sum\limits_{j=1,j\neq k}^{n}|p_{kj}|.
\end{equation*}
Accordingly, it is resulted that
\begin{equation*}
|{\lambda}|\leq \frac{\sum\limits_{j=1}^{n}|h_{kj}|}{|p_{kk}|-\sum\limits_{j=1,j\neq
k}^{n}|p_{kj}|}.
\end{equation*}
By \cref{g_property_later} and \cref{rm2}, we have
\begin{align*}
&|p_{kk}|-\sum_{j=1,j\neq k}^{n}|p_{kj}|-2\sum_{j=1}^{n}|h_{kj}| \\
=&(t_0-h_{kk})- \sum_{j=1,j\neq k}^{n}(h_{kj}-t_{|k-j|})+2\sum_{j=1}^{n}h_{kj}
&&{\rm (by\ \eqref{tau-entries} \ and \, \eqref{sign})} \\
=& t_0+\sum_{j=1,j\neq k}^{n}t_{|k-j|}+\sum_{j=1}^{n}h_{kj} \\
=& t_0+\left(\sum_{j=1}^{k-1}t_{j}+\sum_{j=1}^{n-k}t_{j}\right)+\left(\sum_{j=k+1}^{n-1}t_{j}+\sum_{j=n-k+2}^{n-1}t_{j}\right)\\
\geq&t_0+2\sum_{j=1}^{n-1}t_{j}  > 0,
\end{align*}
which manifests $|\lambda|<1/2$. Therefore, we derive that the eigenvalues of ${\tau}(G_{n}^{(\alpha)})^{-1}H_{n}$ fall inside the interval $(-1/2,1/2)$.
\end{proof}

The above lemma indicates that the spectrum of ${\tau}(G_{n}^{(\alpha)})^{-1}H_n$ are bounded, which is the key to obtain the spectrum of ${\tau}(G_{n}^{(\alpha)})^{-1}G_{n}^{(\alpha)}$.

\begin{theorem}\label{tau-spectrum}
Let ${\lambda}({\tau}(G_{n}^{(\alpha)})^{-1}G_{n}^{(\alpha)})$ be the eigenvalues of matrix ${\tau}(G_{n}^{(\alpha)})^{-1}G_{n}^{(\alpha)}$. Then the following inequality holds
\begin{equation*}
\frac{1}{2}<{\lambda}({\tau}(G_{n}^{(\alpha)})^{-1}G_{n}^{(\alpha)})<\frac{3}{2}.
\end{equation*}
\end{theorem}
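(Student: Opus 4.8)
The plan is to reduce the theorem immediately to the spectral bound on $\tau(G_n^{(\alpha)})^{-1}H_n$ that was just established in \cref{tau-HC}. The starting point is the Hankel-correction identity $\tau(G_n^{(\alpha)})=G_n^{(\alpha)}-H_n$ from \eqref{tauform}, which I would rewrite as
\begin{equation*}
G_n^{(\alpha)}=\tau(G_n^{(\alpha)})+H_n.
\end{equation*}
Since \cref{tau-spd} guarantees that $\tau(G_n^{(\alpha)})$ is symmetric positive definite and hence invertible, I can left-multiply by $\tau(G_n^{(\alpha)})^{-1}$ to obtain the factorization
\begin{equation*}
\tau(G_n^{(\alpha)})^{-1}G_n^{(\alpha)}=I_n+\tau(G_n^{(\alpha)})^{-1}H_n.
\end{equation*}
This is the one structural step that does all the work: it says the preconditioned matrix is a rank-preserving additive shift of $\tau(G_n^{(\alpha)})^{-1}H_n$ by the identity.

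Next I would translate the shift into a spectral statement. If $\lambda$ is an eigenvalue of $\tau(G_n^{(\alpha)})^{-1}G_n^{(\alpha)}$ with eigenvector $z$, then $\mu:=\lambda-1$ is an eigenvalue of $\tau(G_n^{(\alpha)})^{-1}H_n$ with the same eigenvector, and conversely; the two spectra differ exactly by the additive constant $1$. Before invoking the bound I would record that these eigenvalues are genuinely real: because $\tau(G_n^{(\alpha)})$ is symmetric positive definite, the generalized eigenproblem $G_n^{(\alpha)}z=\lambda\,\tau(G_n^{(\alpha)})z$ has real eigenvalues (equivalently, $\tau(G_n^{(\alpha)})^{-1}G_n^{(\alpha)}$ is similar to the symmetric matrix $\tau(G_n^{(\alpha)})^{-1/2}G_n^{(\alpha)}\tau(G_n^{(\alpha)})^{-1/2}$), and positivity of $G_n^{(\alpha)}$ makes them positive. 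Then \cref{tau-HC} gives $\mu\in(-1/2,1/2)$, so
\begin{equation*}
\lambda=1+\mu\in\left(1-\tfrac12,\,1+\tfrac12\right)=\left(\tfrac12,\tfrac32\right),
\end{equation*}
which is exactly the claimed bound.

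The point worth emphasizing is that the genuine analytic difficulty is entirely absorbed into the preceding \cref{tau-HC}: the diagonally-dominant Gershgorin-type estimate there, which pins $|\mu|<1/2$ using the sign pattern $p_{ii}>0,\ p_{ij}<0$ (\cref{rm2}) together with the summation inequality $t_0+2\sum_{j=1}^{n-1}t_j>0$ (\cref{g_property_later}). Given that lemma, the theorem itself is a short corollary, so I do not expect any obstacle at this final stage; the only care required is the brief justification that the eigenvalues are real and that the interval endpoints $1/2$ and $3/2$ are open precisely because \cref{tau-HC} delivers a strict bound $|\mu|<1/2$ uniformly in $n$ and $\alpha$.
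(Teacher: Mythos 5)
Your proposal is correct and follows essentially the same route as the paper: both rewrite $\tau(G_n^{(\alpha)})^{-1}G_n^{(\alpha)}=I_n+\tau(G_n^{(\alpha)})^{-1}H_n$ and shift the spectral bound from \cref{tau-HC} by $1$. The extra remark on the realness of the eigenvalues via similarity to a symmetric matrix is a welcome (and harmless) addition that the paper leaves implicit.
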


\begin{proof}
Using
\begin{equation*}
I_n+{\tau}(G_{n}^{(\alpha)})^{-1}H_n={\tau}(G_{n}^{(\alpha)})^{-1}({\tau}(G_{n}^{(\alpha)})+H_n)=
{\tau}(G_{n}^{(\alpha)})^{-1}G_{n}^{(\alpha)},
\end{equation*}
taking advantage of the conclusion obtained in \cref{tau-HC}, the proof is complete.
\end{proof}


Based on \cref{tau-spectrum}, the following corollary which gives an upper bound in terms of the condition number of the preconditioned matrix can be achieved.

\begin{corollary}\label{conditionnumber}
Let $P_{n}$ defined in \eqref{preconditioner} be the preconditioner of the linear system \eqref{linearsystem}.
Then, the condition number of the preconditioned matrix $P_{n}^{-1}A_{n}$ is less than $3$.
\end{corollary}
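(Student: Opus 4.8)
The final statement to prove is Corollary~\ref{conditionnumber}: the condition number of the preconditioned matrix $P_n^{-1}A_n$ is less than $3$.

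The plan is to reduce the statement entirely to Theorem~\ref{tau-spectrum}, which has just been established. First I would observe that the scalar factor $w>0$ appearing in both $A_n = wG_n^{(\alpha)}$ and $P_n = w\tau(G_n^{(\alpha)})$ cancels, so that $P_n^{-1}A_n = (w\tau(G_n^{(\alpha)}))^{-1}(wG_n^{(\alpha)}) = \tau(G_n^{(\alpha)})^{-1}G_n^{(\alpha)}$. Hence the spectrum of $P_n^{-1}A_n$ coincides exactly with the spectrum of $\tau(G_n^{(\alpha)})^{-1}G_n^{(\alpha)}$, and the bounds $\tfrac12 < \lambda < \tfrac32$ from Theorem~\ref{tau-spectrum} apply verbatim.

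Next I would note that $P_n^{-1}A_n$, although not symmetric as written, is similar to a symmetric positive definite matrix: since both $P_n$ and $A_n$ are symmetric positive definite, $P_n^{-1}A_n$ is similar to $P_n^{-1/2}A_nP_n^{-1/2}$, which is symmetric positive definite. Consequently its eigenvalues are real and positive, and the relevant condition number is the spectral condition number $\kappa = \lambda_{\max}/\lambda_{\min}$, the ratio of the largest to the smallest eigenvalue. Using the two-sided bound from Theorem~\ref{tau-spectrum}, we have $\lambda_{\max} < \tfrac32$ and $\lambda_{\min} > \tfrac12$, whence
\begin{equation*}
\kappa(P_n^{-1}A_n) = \frac{\lambda_{\max}}{\lambda_{\min}} < \frac{3/2}{1/2} = 3.
\end{equation*}
This completes the argument.

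There is essentially no hard part remaining, since all the genuine work has been absorbed into Lemma~\ref{tau-HC} and Theorem~\ref{tau-spectrum}; the corollary is a two-line consequence. The only point requiring a moment's care is justifying that the condition number is governed by the real eigenvalue bounds rather than by singular values — that is, confirming the similarity to a symmetric positive definite matrix so that eigenvalues and singular values of the symmetrized operator coincide and the spectral bounds directly yield the condition-number estimate. I expect this to be the mildest of technicalities rather than a genuine obstacle.
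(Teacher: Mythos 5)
Your proposal is correct and follows essentially the same route as the paper: cancel the scalar $w$ to identify $P_n^{-1}A_n$ with $\tau(G_n^{(\alpha)})^{-1}G_n^{(\alpha)}$ and then apply the bounds $\tfrac12<\lambda<\tfrac32$ from Theorem~\ref{tau-spectrum} to conclude $\kappa_2<3$. The extra remark about similarity to $P_n^{-1/2}A_nP_n^{-1/2}$ is a harmless (and slightly more careful) justification that the paper leaves implicit.
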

\begin{proof}
It is easy to verify that
\begin{equation*}
P_{n}^{-1}A_n={\tau}(G_{n}^{(\alpha)})^{-1}{G_{n}^{(\alpha)}}.
\end{equation*}
By \cref{tau-spectrum}, we obtain
\begin{equation*}
{\kappa}_{2}=\frac{\lambda_{max}(P_{n}^{-1}A_{n})}{\lambda_{min}(P_{n}^{-1}A_{n})}<3.
\end{equation*}
\end{proof}

Corollary \ref{conditionnumber} manifests  that the spectrum of the preconditioned matrix is uniformly bounded independent of the matrix size, where the smallest eigenvalue is  away from $0$.  
Therefore, we conclude that the ${\tau}$-preconditioner is efficient for 1D Riesz fractional diffusion equations. Next, we extend our discussion on the multi-dimensional cases.

\subsection{Multi-dimensional cases}

Recalling the coefficient matrix ${\bf A}$ defined in (\ref{high-A}), the matrix $A_{n_i}$ is the coefficient matrix corresponding to 1D case. In the light of the results shown in previous subsection, we obtain the spectrum of the matrix $\tau(A_i)^{-1}A_{i}$ firstly.


\begin{lemma}\label{lmbda-PA}
 $A_{i}$ is a multi-level Toeplitz matrix defined in \eqref{Ai}. We then have for each $i=1,\dots,m$, the eigenvalues of ${\tau}(A_{i})^{-1}A_{i}$ satisfying
\begin{equation*}
\frac{1}{2}<\lambda({\tau(A_{i})^{-1}A_{i}})<\frac{3}{2}.
\end{equation*}
\end{lemma}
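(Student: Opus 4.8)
The plan is to reduce the multi-level statement to the one-dimensional result already established in \cref{tau-spectrum} by exploiting the Kronecker-product structure of both $A_i$ and its $\tau$-matrix. First I would observe that the scaling factor $w_i$ plays no role: since $A_{n_i}=w_iG_{n_i}^{(\alpha_i)}$ and $\tau(A_{n_i})=w_i\tau(G_{n_i}^{(\alpha_i)})$, the positive constant cancels and $\tau(A_{n_i})^{-1}A_{n_i}=\tau(G_{n_i}^{(\alpha_i)})^{-1}G_{n_i}^{(\alpha_i)}$. Hence \cref{tau-spectrum} immediately gives that every eigenvalue of the $n_i\times n_i$ matrix $\tau(A_{n_i})^{-1}A_{n_i}$ lies in the open interval $(1/2,3/2)$.

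Next I would pass from this one-dimensional factor to the full matrix $\tau(A_i)^{-1}A_i$. Recall $A_i=I_{n_i^-}\otimes A_{n_i}\otimes I_{n_i^+}$ and $\tau_1(A_i)=I_{n_i^-}\otimes\tau(A_{n_i})\otimes I_{n_i^+}$, both of which are invertible, the latter because $\tau(A_{n_i})$ is symmetric positive definite by \cref{tau-spd}. Using that the inverse of a Kronecker product is the Kronecker product of the inverses, together with the mixed-product rule $(B_1\otimes B_2\otimes B_3)(C_1\otimes C_2\otimes C_3)=(B_1C_1)\otimes(B_2C_2)\otimes(B_3C_3)$, I would compute
\begin{equation*}
\tau(A_i)^{-1}A_i=\left(I_{n_i^-}\otimes\tau(A_{n_i})^{-1}\otimes I_{n_i^+}\right)\left(I_{n_i^-}\otimes A_{n_i}\otimes I_{n_i^+}\right)=I_{n_i^-}\otimes\left(\tau(A_{n_i})^{-1}A_{n_i}\right)\otimes I_{n_i^+}.
\end{equation*}

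Finally I would read off the spectrum. Because the eigenvalues of a Kronecker product are the pairwise products of the eigenvalues of its factors, and the identity blocks contribute only the eigenvalue $1$, the spectrum of $I_{n_i^-}\otimes(\tau(A_{n_i})^{-1}A_{n_i})\otimes I_{n_i^+}$ coincides, up to the multiplicity factor $n_i^-n_i^+$, with the spectrum of $\tau(A_{n_i})^{-1}A_{n_i}$. Combining this with the first step yields $1/2<\lambda(\tau(A_i)^{-1}A_i)<3/2$, which is the claim.

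I do not anticipate a genuine obstacle here, as the argument is essentially bookkeeping with Kronecker products. The only points requiring mild care are the cancellation of $w_i$, so that the one-dimensional bound from \cref{tau-spectrum} applies verbatim, and the justification that $I_{n_i^-}\otimes M\otimes I_{n_i^+}$ carries exactly the eigenvalues of $M$. The latter follows from the standard eigenvalue formula for Kronecker products and does not rely on $\tau(A_{n_i})^{-1}A_{n_i}$ being symmetric; it is merely similar to a symmetric positive definite matrix, since both $A_{n_i}$ and $\tau(A_{n_i})$ are symmetric positive definite.
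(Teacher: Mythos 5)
Your proof is correct and follows essentially the same route as the paper: cancel the scalar $w_i$, use the mixed-product property of Kronecker products to write $\tau(A_i)^{-1}A_i=I_{n_i^-}\otimes(\tau(A_{n_i})^{-1}A_{n_i})\otimes I_{n_i^+}$, and read off the spectrum from \cref{tau-spectrum}. Your remarks on the $w_i$ cancellation and on why the identity factors preserve the spectrum merely make explicit what the paper leaves implicit.
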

\begin{proof}
It is clear that
\begin{align*}
{\tau}(A_{i})^{-1}A_{i}&=\left(I_{n_{i}^{-}}\otimes {\tau({A}_{n_i})}\otimes I_{n_{i}^{+}}\right)^{-1}\left(I_{n_{i}^{-}}\otimes {A}_{n_i}\otimes I_{n_{i}^{+}}\right) \\
&=\left(I_{n_{i}^{-}}^{-1}\otimes {\tau({A}_{n_i})}^{-1}\otimes I_{n_{i}^{+}}^{-1}\right)
\left(I_{n_{i}^{-}}\otimes{A}_{n_i}\otimes I_{n_{i}^{+}}\right) \\
&=I_{n_{i}^{-}}\otimes ({\tau({A}_{n_i})}^{-1}A_{n_i})\otimes I_{n_{i}^{+}}.
\end{align*}
Invoking \cref{tau-spectrum}, for each $i$, it holds that
\begin{equation*}
\frac{1}{2}<{\lambda}({\tau}(A_{n_i})^{-1}A_{n_i})={\lambda}({\tau}(G^{(\alpha_i)}_{n_i})^{-1}G^{(\alpha_i)}_{n_i})<\frac{3}{2}.
\end{equation*}
Utilizing the properties of Kronecker product,
it is easy to check each $i$ holding
\begin{equation*}
\frac{1}{2}<\lambda({\tau(A_{i})^{-1}A_{i}})<\frac{3}{2}.
\end{equation*}
\end{proof}

This lemma shows that the spectrum of ${\tau}(A_i)^{-1}A_i$ is bounded for each $i$, which will be exploited to prove our aim conclusion that the spectrum of the preconditioned matrix ${\bP^{-1}\bA}$ is uniformly bounded.

\begin{theorem}\label{spectrum}
The spectrum of the preconditioned matrix ${\bP}^{-1}\bA$ is uniformly bounded below by $1/2$ and bounded above by $3/2$.
\end{theorem}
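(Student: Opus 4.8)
The plan is to transfer the uniform per-direction bounds of \cref{lmbda-PA} to the full sums $\bA=\sum_{i=1}^m A_i$ and $\bP=\sum_{i=1}^m\tau_1(A_i)$ by passing to quadratic forms (equivalently, the Loewner order). The decisive feature of \cref{lmbda-PA} is that the constants $1/2$ and $3/2$ are the \emph{same} for every $i$, and this uniformity is precisely what survives the summation.

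First I would rephrase \cref{lmbda-PA} as a statement about quadratic forms. For each $i$, both $A_i=I_{n_i^-}\otimes A_{n_i}\otimes I_{n_i^+}$ and $\tau_1(A_i)=I_{n_i^-}\otimes\tau(A_{n_i})\otimes I_{n_i^+}$ are symmetric positive definite, so the eigenvalues of $\tau_1(A_i)^{-1}A_i$ coincide with those of the symmetric matrix $\tau_1(A_i)^{-1/2}A_i\tau_1(A_i)^{-1/2}$. Since \cref{lmbda-PA} places these eigenvalues in $(1/2,3/2)$, a congruence by $\tau_1(A_i)^{1/2}$ gives, for every nonzero $x\in\IR^N$,
\begin{equation*}
\tfrac12\,x^{\tT}\tau_1(A_i)x \;<\; x^{\tT}A_i x \;<\; \tfrac32\,x^{\tT}\tau_1(A_i)x,\qquad i=1,\dots,m.
\end{equation*}

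Next I would sum these $m$ strict inequalities. Using $\bA=\sum_i A_i$ and $\bP=\sum_i\tau_1(A_i)$, this yields $\tfrac12\,x^{\tT}\bP x < x^{\tT}\bA x < \tfrac32\,x^{\tT}\bP x$ for every nonzero $x$. Because $\bP$ is symmetric positive definite, $x^{\tT}\bP x>0$ for $x\neq0$, so the generalized Rayleigh quotient $x^{\tT}\bA x/x^{\tT}\bP x$ lies strictly between $1/2$ and $3/2$. Since this quotient ranges over exactly the eigenvalues of the generalized problem $\bA v=\lambda\bP v$, equivalently of $\bP^{-1}\bA$, the spectrum of $\bP^{-1}\bA$ is contained in $(1/2,3/2)$, as asserted.

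I do not anticipate a genuine obstacle: the core of the argument is simply that the Loewner ordering is additive. The point worth stressing is that the clean interval $(1/2,3/2)$ is preserved by the summation \emph{only} because the bounds in \cref{lmbda-PA} hold with constants independent of $i$; nonuniform constants would instead produce $\min_i$ and $\max_i$ bounds. A small technical check is that each $x^{\tT}\tau_1(A_i)x$ is strictly positive for $x\neq0$, so the term-by-term strict inequalities remain strict after adding.
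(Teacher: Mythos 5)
Your argument is correct and follows essentially the same route as the paper: both pass from the per-direction eigenvalue bounds of \cref{lmbda-PA} to quadratic-form (Loewner) inequalities via the Rayleigh quotient, sum over $i$ using $\bA=\sum_i A_i$ and $\bP=\sum_i\tau_1(A_i)$, and then read off the spectrum of $\bP^{-1}\bA$ from the generalized Rayleigh quotient. The only cosmetic difference is that you make the symmetrization by $\tau_1(A_i)^{-1/2}$ explicit, whereas the paper invokes the Rayleigh quotients theorem directly.
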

\begin{proof}
Let ${\bz}\in {\IR}^{N}$ be any nonzero vector. By the Rayleigh quotients theorem (see Theorem 4.2.2 in \cite{HJ-MA-2012}) and \cref{lmbda-PA}, for each $i$, it holds
\begin{equation*}
\frac{1}{2}<{\lambda}_{\min}({\tau(A_{i})^{-1}A_{i}})\le\frac{{\bz}^{\tT}A_{i}{\bz}}{{\bz}^{\tT}\tau(A_{i}){\bz}}
\le{\lambda}_{\max}({\tau(A_{i})^{-1}A_{i}})<\frac{3}{2},
\end{equation*}
that is,
\begin{equation*}
\frac{1}{2}{\bz}^{\tT}\tau(A_i){\bz}<{{\bz}^{\tT}A_{i}{\bz}}<\frac{3}{2}{\bz}^{\tT}\tau(A_i){\bz}.
\end{equation*}
Thus we have
\begin{equation*}
\frac{1}{2}{\bz}^{\tT}\sum\limits_{i=1}^{m}\tau(A_i){\bz}<{{\bz}^{\tT}\sum\limits_{i=1}^{m}A_{i}{\bz}}
<\frac{3}{2}{\bz}^{\tT}\sum\limits_{i=1}^{m}\tau(A_i){\bz}.
\end{equation*}
It results that
\begin{equation*}
\frac{1}{2}<\frac{{\bz}^{\tT}\sum\limits_{i=1}^{m}A_{i}{\bz}}{{\bz}^{\tT}\sum\limits_{i=1}^{m}\tau(A_i){\bz}}<\frac{3}{2},
\end{equation*}
i.e.,
\begin{equation*}
\frac{1}{2}<\frac{{\bz}^{\tT}{\bA}{\bz}}{{\bz}^{\tT}{\bP}{\bz}}<\frac{3}{2}.
\end{equation*}
Therefore, we have
\begin{equation*}
{\lambda}_{\min}({\bP}^{-1}{\bA})=\min_{\bz}\frac{{\bz}^{\tT}{\bA}{\bz}}{{\bz}^{\tT}{\bP}{\bz}}>1/2, \quad {\lambda}_{\max}({\bP}^{-1}{\bA})=\max_{\bz}\frac{{\bz}^{\tT}{\bA}{\bz}}{{\bz}^{\tT}{\bP}{\bz}}<3/2.
\end{equation*}
\end{proof}

 \cref{spectrum} indicates that the spectrum of the preconditioned matrix is uniformly bounded. Furthermore, applying the results of \cref{spectrum}, we obtain the following corollary.
\begin{corollary}
Let ${\bA}$ be the coefficient matrix of the multi-dimensional Riesz fractional diffusion equation defined in (\ref{high-A}), ${\bP}$ be the preconditioner defined in \eqref{high-preconditioner}. Then the condition number of the preconditioned matrix ${\bf P}^{-1}{\bf A}$ is less than $3$.
\end{corollary}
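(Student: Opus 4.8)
The plan is to read off the condition number directly from the spectral localization already established in \cref{spectrum}, exactly as was done in the one-dimensional setting. First I would note that, since both $\bA$ and $\bP$ are symmetric positive definite (the latter by the lemma immediately following \eqref{high-preconditioner}), the generally non-symmetric matrix $\bP^{-1}\bA$ is similar to the symmetric positive definite matrix $\bP^{-1/2}\bA\bP^{-1/2}$. Consequently all of its eigenvalues are real and positive, and its spectral condition number is simply ${\kappa}_2 = {\lambda}_{\max}(\bP^{-1}\bA)/{\lambda}_{\min}(\bP^{-1}\bA)$. This is precisely the quantity governing the linear convergence rate of the PCG iteration applied to \eqref{high-linearsystem}.

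Next I would invoke \cref{spectrum}, which guarantees the two-sided bound $1/2 < {\lambda}_{\min}(\bP^{-1}\bA)$ and ${\lambda}_{\max}(\bP^{-1}\bA) < 3/2$. Dividing the upper bound by the lower bound then yields
\[
{\kappa}_2 = \frac{{\lambda}_{\max}(\bP^{-1}\bA)}{{\lambda}_{\min}(\bP^{-1}\bA)} < \frac{3/2}{1/2} = 3,
\]
which is the assertion of the corollary. The structure of this argument is identical to the proof of the one-dimensional \cref{conditionnumber}; only the underlying spectral bound is now the multi-dimensional \cref{spectrum} rather than \cref{tau-spectrum}.

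There is essentially no genuine obstacle in this final step: all the difficulty has already been absorbed into \cref{spectrum} and, beneath it, into the uniform interval $(-1/2,1/2)$ for the spectrum of ${\tau}(G^{(\alpha)}_n)^{-1}H_n$ supplied by \cref{tau-HC}. The only point deserving a moment's care is the justification that $\bP^{-1}\bA$ possesses real positive eigenvalues, so that the ratio ${\lambda}_{\max}/{\lambda}_{\min}$ is a legitimate spectral condition number; this is immediate from the symmetric positive definiteness of $\bP$ and $\bA$ and the similarity transformation noted above. Hence the corollary follows in one line from \cref{spectrum}.
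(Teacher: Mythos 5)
Your argument is correct and matches the paper's intended reasoning: the paper states this corollary as an immediate consequence of \cref{spectrum}, mirroring the explicit ratio argument it gives for the one-dimensional \cref{conditionnumber}, which is exactly what you do. Your additional remark that $\bf P^{-1}\bf A$ is similar to a symmetric positive definite matrix (so the eigenvalue ratio is a legitimate spectral condition number) is a sound point of care that the paper leaves implicit.
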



Up to now, for multi-dimensional Riesz fractional diffusion equations, we have proved that the spectrum of the preconditioned matrix is uniformly bounded. Since the smallest eigenvalue of the preconditioned matrix is bounded away from $0$, which manifests that the PCG method converges linearly. Moreover, we deduce that the condition number is less than 3, which reveals the number of iterations is independent of the size of the coefficient matrix. From the theoretical point of view, we have proved the efficiency of the $\tau$-preconditioner for ill-conditioned linear systems \eqref{linearsystem} and \eqref{high-linearsystem} arising from Riesz fractional derivative, whose generating function is with fractional order zeros.

\section{Extension to ill-conditioned multi-level Toeplitz matrices}\label{extension}
In this section, we extend our discussion to general multi-level Toeplitz matrices whose generating functions are with fractional order zeros at the origin.

Let ${\bafa}=(\alpha_1,\dots,\alpha_m)$ with each ${\alpha_i}\in (1,2)$, ${\btheta}=(\theta_1,\dots,\theta_m)\in [-\pi,\pi]^{m}$. Suppose $p_{\bafa}(\btheta)\in L^{1}([-\pi,\pi]^m)$ is a nonnegative integrable even function with $p_{\bafa}({\btheta})=0$ where ${\btheta}=({0,0,\dots,0})$.
Making use of $\eqref{Fouriercoefficient}$, $p_{\bafa}({\btheta})$ generates an $m$-level Toeplitz matrix ${\bB}$, which has the same structure as \eqref{m-level-matrix}, where the Toeplitz block at $i$-th level is with the matrix size $n_i\times n_i$ for $i=1,\dots,m$. Let $N=\prod\limits_{i=1}^{m}n_i$. We derive an $N\times N$ multi-level Toeplitz linear system
\begin{equation}\label{fsystem}
{\bB}{\bu}={\bb},
\end{equation}
where $\bu\in {\IR}^N$ is unknown, ${\bb}\in {\IR}^N$ is the right hand side.

As known that the inverse of the $\tau$ matrix  can be obtained in $\mO(N\log N)$ computational cost  by the fast discrete sine transform, we then construct a new $\tau$-preconditioner based on the discretized Riesz derivatives that can match the fractional order zero of the generating function.

Denote
\begin{equation}\label{generatingfunction_q}
q_{\bafa}({\btheta})=\sum_{i=1}^{m}l_{i}|{\theta_i}|^{\alpha_i},
\end{equation}
where $l_i>0$ for $i=1,\dots,m$ are   constants.
We refer to the multi-level Toeplitz matrix arising from $q_{\bafa}({\btheta})$ as ${\bQ}$.
For convenience of our investigation, the generating function $p_{\bafa}({\btheta})$ is required to satisfy the following assumption.
\begin{assumption}\label{assumption}
There exists two positive constants $c_{0}<c_{1}$ such that
\begin{equation*}
0<c_{0}\le\frac{p_{\bafa}({\btheta})}{q_{\bafa}({\btheta})}\le c_{1}.
\end{equation*}
\end{assumption}

 Note that this assumption indicates that the generating function $p_{\bafa}({\btheta})$ on each direction $i$ has an $\alpha_{i}$-th order zero at ${\theta_i}=0$ for $i=1,\dots,m$. According to precious analysis, we know that system \eqref{fsystem} is  symmetric positive definite and ill-conditioned. Therefore, a positive definite preconditioner is indispensable. However, the  positive definiteness of $\bB$ cannot guarantee that so is the corresponding ${\tau}$-preconditioner ${\tau(\bB)}$ \cite{S-MC-1999}. Therefore, the ${\tau(\bB)}$ cannot be directly applied to precondition ${ \bB }$. Actually, the numerical results shown in  \cref{t4} of \Cref{numericalresults}  exhibit the bad performance of the preconditioner ${\tau(\bB)}$.
Therefore, it is essential to design a preconditioner aiming at such kind of ill-conditioned systems arising from generating functions with fractional order zeros. In the following, we construct a new preconditioner based on the $\tau$-preconditioner of the Riesz fractional derivative that is symmetric positive definite.

Denote
\begin{equation}\label{G}
g_{\bafa}({\btheta})=\sum_{i=1}^{m}l_{i}g_{\alpha_i}({\theta_i}),
\end{equation}
where $g_{\alpha_i}({\theta_i})$ is defined as \eqref{g-form}.
Then, we obtain the corresponding matrix as
\begin{equation*}
{\bf G}=\sum\limits_{i=1}^{m}I_{n_{i}^{-}}\otimes l_{i}G_{n_i}^{(\alpha_i)}\otimes I_{n_{i}^{+}},
\end{equation*}
where $G_{n_i}^{(\alpha_i)}$ is defined as \eqref{A}.

Taking advantage of the above auxiliary tools, the specific procedures of constructing preconditioner $\bP$ of ${\bB}$ are depicted as follows.

\begin{itemize}
 \item [1.] The matrix ${\bQ}$ generated by $q_{\bafa}({\btheta})$ defined in \eqref{generatingfunction_q} is employed to approximate ${\bB}$, denoted as    ${\bP}^{(1)}={\bQ}$.
 \item [2.] The matrix ${\bG}$ generated by \eqref{G} is exploited to approximate $\bQ$, denoted as ${\bP}^{(2)}={\bG}{\bQ}^{-1}$.
 \item [3.] The matrix ${\tau({\bG})}$ is used to approximate ${\bG}$, denoted as ${\bP^{(3)}}= \tau({\bG}){\bG}^{-1}$.
 \item [4.] Finally, the preconditioner of $\bB$ is constructed by $\bP={\bP}^{(3)}{\bP}^{(2)}{\bP}^{(1)}$; i.e.,
\begin{equation}\label{new_pre}
{\bP}={\tau}(\bf{G}).
\end{equation}
\end{itemize}

To obtain the spectrum of the preconditioned matrix $\tau({\bG})^{-1}{\bB}$, the properties of matrices ${\bP}^{(1)}$, ${\bP}^{(2)}$, ${\bP}^{(3)}$ will be considered. Under \cref{assumption}, the following lemma is achieved.

\begin{lemma}\label{BQ}
Let ${\bf B}$, ${\bf Q}$ be the multi-level Toeplitz matrices generated by $p_{\bafa}({\btheta})$ and $q_{\bafa}({\btheta})$, respectively. Then, for any nonzero vector ${\bz}\in {\IR}^{N}$, we have
\begin{equation*}
\frac{{\bz}^{\tT}{\bB}{\bz}}{{\bz}^{\tT}{\bf Q}{\bz}}\in [c_{0},c_{1}].
\end{equation*}
\end{lemma}
\begin{proof}
Note that the eigenvalues of matrix ${\bQ^{-1}\bB}$ are subjected to the Grenander--Szeg\"{o}'s theorem \cite{GS-Toepliz-form}; i.e.,
\begin{equation*}
\min\frac{p_{\bafa}(\btheta)}{q_{\bafa}(\btheta)}\le\lambda_{\min}(\bQ^{-1}\bB)\le\lambda_{\max}(\bQ^{-1}\bB)\le
\max\frac{p_{\bafa}(\btheta)}{q_{\bafa}(\btheta)}.
\end{equation*}
Combining the Rayleigh quotients theorem with \cref{assumption}, we obtain
\begin{equation*}
0<c_{0}\le\lambda_{\min}(\bQ^{-1}\bB)\le\frac{\bz^{\tT}{\bf B}\bz}{\bz^{\tT}{\bf Q}\bz}\le\lambda_{\max}(\bQ^{-1}\bB)\le c_{1}.
\end{equation*}
The proof is complete.
\end{proof}

Similarly, we have the following lemma.
\begin{lemma}\label{GQ}
The matrix ${\bf G}$ is generated by $g_{\bafa}({\btheta})$ defined in \eqref{G}. For ${\alpha_i}\in(1,2)$, $i=1,\dots,m$, it holds that
\begin{equation*}
\frac{1}{2}\le\lambda({\bf {G}^{-1}Q})\le c_2,
\end{equation*}
where $c_2=\max\limits_{i}\frac{\pi^{2}}{-8\cos({\pi\alpha_i/2})}$.
\end{lemma}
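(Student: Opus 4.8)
The plan is to lift the one-dimensional pointwise bounds of \cref{fractional-zeros-bounded} to the multivariate generating functions $g_{\bafa}$ and $q_{\bafa}$, and then transfer those pointwise bounds into inequalities between the quadratic forms of $\bG$ and $\bQ$. First I would apply \cref{fractional-zeros-bounded} to each index $i$, which gives
\begin{equation*}
\frac{1}{2}\,g_{\alpha_i}(\theta_i)\le |\theta_i|^{\alpha_i}\le \frac{\pi^2}{-8\cos(\pi\alpha_i/2)}\,g_{\alpha_i}(\theta_i),\qquad \theta_i\in[-\pi,\pi].
\end{equation*}
Multiplying by $l_i>0$ and summing over $i$, and using $\frac{\pi^2}{-8\cos(\pi\alpha_i/2)}\le c_2$ for each $i$ in the upper bound, yields the pointwise estimate
\begin{equation*}
\frac{1}{2}\,g_{\bafa}(\btheta)\le q_{\bafa}(\btheta)\le c_2\,g_{\bafa}(\btheta),\qquad \btheta\in[-\pi,\pi]^m.
\end{equation*}

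Next I would convert this pointwise ordering of generating functions into an ordering of the corresponding multi-level Toeplitz matrices. The cleanest route mirrors the proof of \cref{spectrum}: the estimate above holds separately on each coordinate direction, so at the matrix level each one-dimensional block satisfies $\frac{1}{2}G_{n_i}^{(\alpha_i)}\preceq Q_{n_i}^{(\alpha_i)}\preceq \frac{\pi^2}{-8\cos(\pi\alpha_i/2)}G_{n_i}^{(\alpha_i)}$, where $Q_{n_i}^{(\alpha_i)}$ denotes the Toeplitz matrix generated by $|\theta_i|^{\alpha_i}$. Tensoring with identities and invoking the Rayleigh-quotient argument used in \cref{spectrum}, summing the resulting per-direction inequalities over $i$, I obtain for every nonzero $\bz\in\IR^N$
\begin{equation*}
\frac{1}{2}\,{\bz}^{\tT}{\bf G}{\bz}\le {\bz}^{\tT}{\bf Q}{\bz}\le c_2\,{\bz}^{\tT}{\bf G}{\bz}.
\end{equation*}

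Finally, since $g_{\bafa}$ is a nonnegative even function that is strictly positive away from the origin, $\bG$ is symmetric positive definite, so $\bG^{-1}\bQ$ is similar to the symmetric matrix $\bG^{-1/2}\bQ\bG^{-1/2}$ and its eigenvalues coincide with the range of the Rayleigh quotient ${\bz}^{\tT}\bQ{\bz}/{\bz}^{\tT}\bG{\bz}$. Dividing the displayed inequality by ${\bz}^{\tT}\bG{\bz}>0$ then gives $\tfrac12\le\lambda(\bG^{-1}\bQ)\le c_2$, as claimed. I expect the main obstacle to be the middle step, namely justifying that the pointwise inequality between generating functions transfers to the quadratic-form ordering of the multi-level matrices; this rests on the fact that a nonnegative even generating function produces a positive semidefinite (multi-level) Toeplitz matrix, which follows either from the Grenander--Szeg\"o theory already cited in \cref{BQ} or, most transparently here, from reusing the per-direction Kronecker decomposition and Rayleigh-quotient summation exactly as in \cref{spectrum}.
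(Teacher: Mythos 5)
Your proposal is correct and follows essentially the same route as the paper: both reduce to the one-dimensional pointwise bounds of \cref{fractional-zeros-bounded} to get $\tfrac12\,g_{\bafa}(\btheta)\le q_{\bafa}(\btheta)\le c_2\,g_{\bafa}(\btheta)$ (the paper via the mediant inequality on the ratio of weighted sums, you by summing the weighted one-dimensional inequalities, which is equivalent), and then transfer this to $\lambda(\bG^{-1}\bQ)$ via the Grenander--Szeg\H{o}/Rayleigh-quotient argument already used in \cref{BQ}. Your alternative per-direction Kronecker-sum transfer is also valid but is not needed; the single application of Grenander--Szeg\H{o} to the multivariate symbols, as in the paper, suffices.
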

\begin{proof}
It is easy to check that
\begin{equation*}
\min\limits_{i}\frac{|\theta_i|^{\alpha_i}}{g_{\alpha_i}({\theta_i})}\le\frac{q_{\bafa}(\btheta)}{g_{\bafa}(\btheta)}
=\frac{\sum\limits_{i=1}^{m}l_{i}|\theta_{i}|^{\alpha_i}}
{\sum\limits_{i=1}^{m}l_{i}g_{\alpha_i}(\theta_i)}\le\max\limits_{i}\frac{|\theta_i|^{\alpha_i}}{g_{\alpha_i}({\theta_i})}.
\end{equation*}
From \cref{fractional-zeros-bounded}, we derive that for each $i$ holds
\begin{equation*}
\frac{1}{2}\le\frac{|\theta_i|^{\alpha_i}}{g_{\alpha_i}({\theta_i})}\le\frac{\pi^{2}}{-8\cos({\pi\alpha_i/2})}.
\end{equation*}
Then, it immediately arrives
\begin{equation*}
\frac{1}{2}\le\frac{q_{\bafa}(\btheta)}{g_{\bafa}(\btheta)}\le\max\limits_{i}\frac{\pi^{2}}{-8\cos({\pi\alpha_i/2})}=c_2.
\end{equation*}
In view of the proof of \cref{BQ}, it suffices to show that
\begin{equation*}
\frac{1}{2}\le\min\frac{g_{\bafa}(\btheta)}{q_{\bafa}(\btheta)}\le\lambda({\bf {G}^{-1}Q})\le\max\frac{g_{\bafa}(\btheta)}{q_{\bafa}(\btheta)}\le c_2.
\end{equation*}
\end{proof}

With the help of the above lemmas, the following theorem with respect to the spectrum of the preconditioned matrix is attained.

\begin{theorem}\label{spectrum-bounded}
The spectrum of preconditioned matrix ${\tau}(\bf G)^{-1}{\bf B}$ is bounded below and above by constants independent of the matrix size; i.e.,
\begin{equation}\label{preconditioned-bounded}
\frac{c_{0}}{4}\le{\lambda}({\tau}({\bf G})^{-1}{\bf B})\le\frac{3c_{1}c_{2}}{2}.
\end{equation}
\end{theorem}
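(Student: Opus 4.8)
The plan is to control the generalized Rayleigh quotient $\bz^{\tT}\bB\bz/\bz^{\tT}\tau(\bG)\bz$ for every nonzero $\bz\in\IR^N$ and then pass to the eigenvalues of $\tau(\bG)^{-1}\bB$ via the Rayleigh quotient theorem. Since $\bB$, $\bQ$, $\bG$, and $\tau(\bG)$ are all symmetric positive definite, each quadratic form appearing below is strictly positive, so I would factor the target quotient as the telescoping product
\begin{equation*}
\frac{\bz^{\tT}\bB\bz}{\bz^{\tT}\tau(\bG)\bz}
=\frac{\bz^{\tT}\bB\bz}{\bz^{\tT}\bQ\bz}\cdot
\frac{\bz^{\tT}\bQ\bz}{\bz^{\tT}\bG\bz}\cdot
\frac{\bz^{\tT}\bG\bz}{\bz^{\tT}\tau(\bG)\bz},
\end{equation*}
and bound the three factors separately using the results already established, each factor corresponding to one of the three approximation steps $\bP^{(1)}$, $\bP^{(2)}$, $\bP^{(3)}$.

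For the first factor, \cref{BQ} gives directly $c_0\le\bz^{\tT}\bB\bz/\bz^{\tT}\bQ\bz\le c_1$. For the second factor, \cref{GQ} asserts $\tfrac12\le\lambda(\bG^{-1}\bQ)\le c_2$; reading this through the Rayleigh quotient for the symmetric positive definite pair $(\bQ,\bG)$ yields $\tfrac12\le\bz^{\tT}\bQ\bz/\bz^{\tT}\bG\bz\le c_2$. The key observation for the third factor is that $\bG=\sum_{i=1}^m I_{n_i^-}\otimes l_iG_{n_i}^{(\alpha_i)}\otimes I_{n_i^+}$ has exactly the block structure of the coefficient matrix $\bA$ of the multi-dimensional Riesz problem, with the positive weights $w_i$ merely replaced by $l_i>0$. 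Consequently \cref{spectrum} applies verbatim to the pair $(\bG,\tau(\bG))$ — with $\tau(\bG)$ understood as the additive multi-level $\tau$-matrix $\sum_{i=1}^m I_{n_i^-}\otimes\tau(l_iG_{n_i}^{(\alpha_i)})\otimes I_{n_i^+}$, precisely the preconditioner to which that theorem pertains — and gives $\tfrac12<\lambda(\tau(\bG)^{-1}\bG)<\tfrac32$, equivalently $\tfrac12<\bz^{\tT}\bG\bz/\bz^{\tT}\tau(\bG)\bz<\tfrac32$.

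Multiplying these three factor bounds yields $\tfrac{c_0}{4}\le\bz^{\tT}\bB\bz/\bz^{\tT}\tau(\bG)\bz\le\tfrac{3c_1c_2}{2}$ for every nonzero $\bz$, whence minimizing and maximizing over $\bz$ delivers the stated eigenvalue bounds $\tfrac{c_0}{4}\le\lambda(\tau(\bG)^{-1}\bB)\le\tfrac{3c_1c_2}{2}$. I expect the argument to be short once the telescoping is in place; the only point needing genuine care is the justification that \cref{spectrum} is applicable to $\bG$ — one checks that replacing the weights $w_i$ by arbitrary $l_i>0$ leaves that theorem's proof intact, since only the positivity of the weights was used there — together with the routine confirmation that all four matrices are symmetric positive definite, which is what legitimizes splitting the quotient into the three well-defined factors above.
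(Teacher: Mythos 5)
Your proposal is correct and follows essentially the same route as the paper: the identical three-factor telescoping of the Rayleigh quotient through $\bQ$ and $\bG$, bounded by \cref{BQ}, \cref{GQ}, and \cref{spectrum}, then the Rayleigh quotient theorem. Your explicit remark that \cref{spectrum} applies to $\bG$ because only positivity of the weights is used is a point the paper glosses over, but it does not change the argument.
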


\begin{proof}
Note that by \cref{spectrum}, for any nonzero vector ${\bz}\in{\IR}^{N}$, it follows that
\begin{equation*}
\frac{1}{2}<\frac{\bz^{\tT}{\bf G}\bz}{\bz^{\tT}\tau({\bf G})\bz}<\frac{3}{2}.
\end{equation*}
Then, combining \cref{BQ} and \cref{GQ}, it holds that
\begin{equation*}
\frac{c_{0}}{4}\le\frac{\bz^{\tT}{\bf B}\bz}{\bz^{\tT}\tau({\bf G})\bz}=\frac{\bz^{\tT}{\bf G}\bz}{\bz^{\tT}\tau({\bf G})\bz}{\cdot}
\frac{\bz^{\tT}{\bf Q}\bz}{\bz^{\tT}{\bf G}\bz}{\cdot}\frac{\bz^{\tT}{\bf B}\bz}{\bz^{\tT}{\bf Q}\bz}\le\frac{3c_{1}c_{2}}{2}.
\end{equation*}
Invoking the Rayleigh quotients theorem again, we derive
\begin{equation*}
\frac{c_{0}}{4}\le{\lambda}({\tau}({\bf G})^{-1}{\bf B})\le\frac{3c_{1}c_{2}}{2}.
\end{equation*}
\end{proof}

This theorem implies the spectrum of the preconditioned matrix is bounded, and the smallest eigenvalue of the preconditioned matrix is bounded away from $0$, which means the linear convergent rate if the PCG method is exploited to solve ill-conditioned multi-level Toeplitz systems. Moreover, we derive the following corollary.

\begin{corollary}
The condition number of the preconditioned matrix ${\tau}({\bf G})^{-1}{\bf B}$ is bounded by constant.
\end{corollary}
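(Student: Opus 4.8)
The plan is to read the condition number straight off the two-sided spectral estimate already established in \cref{spectrum-bounded}. The first point I would settle is that the spectral condition number of $\tau(\bG)^{-1}\bB$ is well defined, i.e.\ that its eigenvalues are real and strictly positive. Although $\tau(\bG)^{-1}\bB$ is itself a product of symmetric matrices and need not be symmetric, both factors $\tau(\bG)$ and $\bB$ are symmetric positive definite. Writing the Cholesky factorization $\tau(\bG)=L L^{\tT}$ with $L$ nonsingular, the matrix $\tau(\bG)^{-1}\bB$ is similar to $L^{-1}\bB L^{-\tT}$, which is symmetric positive definite. Hence all eigenvalues $\lambda(\tau(\bG)^{-1}\bB)$ are real and positive, and the spectral condition number is exactly the ratio $\kappa_2=\lambda_{\max}/\lambda_{\min}$.

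Next I would invoke \cref{spectrum-bounded}, which supplies the uniform bounds $\frac{c_0}{4}\le\lambda(\tau(\bG)^{-1}\bB)\le\frac{3c_1c_2}{2}$. Taking the quotient of the upper bound on $\lambda_{\max}$ and the lower bound on $\lambda_{\min}$ then gives
\begin{equation*}
\kappa_2=\frac{\lambda_{\max}\!\left(\tau(\bG)^{-1}\bB\right)}{\lambda_{\min}\!\left(\tau(\bG)^{-1}\bB\right)}\le\frac{3c_1c_2/2}{c_0/4}=\frac{6c_1c_2}{c_0}.
\end{equation*}
Finally I would record that this upper bound does not depend on the matrix size $N$: the constants $c_0<c_1$ originate from \cref{assumption} and depend only on the generating functions $p_{\bafa}$ and $q_{\bafa}$, while $c_2=\max_i\frac{\pi^2}{-8\cos(\pi\alpha_i/2)}$ depends solely on the fractional orders $\alpha_i\in(1,2)$. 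None of these quantities varies with $n_1,\dots,n_m$, so $\kappa_2$ is bounded by a fixed constant, as claimed.

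There is essentially no obstacle here beyond the single bookkeeping observation that the eigenvalues are real and positive; all of the analytic content is inherited from \cref{spectrum-bounded}. The one step I would take care to state explicitly is the similarity transformation of the previous paragraph, which is what legitimizes identifying $\kappa_2$ with the eigenvalue ratio for the nonsymmetric product $\tau(\bG)^{-1}\bB$.
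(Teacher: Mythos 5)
Your proposal is correct and follows essentially the same route as the paper: divide the upper bound $\tfrac{3c_1c_2}{2}$ by the lower bound $\tfrac{c_0}{4}$ from \cref{spectrum-bounded} to obtain $\kappa_2\le\tfrac{6c_1c_2}{c_0}$, a constant independent of $N$. The only addition is your explicit justification (via the Cholesky similarity) that the eigenvalues of $\tau({\bf G})^{-1}{\bf B}$ are real and positive, which the paper leaves implicit.
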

\begin{proof}
From \cref{spectrum-bounded}, we obtain
\begin{equation*}
{\kappa}_{2}(\tau(\bG)^{-1}\bB)=\frac{\lambda_{\max}(\tau(\bG)^{-1}\bB)}{{\lambda}_{\min}(\tau(\bG)^{-1}\bB)}
\le\frac{6c_{1}c_{2}}{c_0}.
\end{equation*}
\end{proof}

This corollary indicates the condition number of the preconditioned matrix is bounded by a constant. Hence, the number of iterations of the PCG method with our preconditioner for solving the system \eqref{fsystem} is expected to be independent of the matrix size.

\section{Numerical results}
\label{numericalresults}
In this section, the numerical experiments are carried out to examine the efficiency of the proposed methods. All results are performed via MATLAB R2017a on a PC with the
configuration: Intel(R) Core(TM)i7-8700 CPU $@$3.20 3.20GHz and 8 GB RAM.

To exhibit the performance of the PCG method with the ${\tau}$-preconditioner for solving linear systems \eqref{linearsystem} and \eqref{high-linearsystem}, we also implement  other preconditioners as comparisons, including the Strang circulant preconditioner \cite{LS-JCP-2012} and the banded preconditioner \cite{LYJ-CCP-2015}, where
 the banded preconditioner   $B_{n}$ is defined as
\begin{equation*}{\footnotesize
B_{n}=
\left[
\begin{array}{cccccc}
b_{0} & b_{1}  & \dots& b_{k-1}  & \  &  \   \\
b_{1}&   b_{0} & a_{1} &  \ & \ddots  & \    \\
\vdots & b_{1} &\ddots & \ &\  & b_{k-1}   \\
b_{k-1} & \ & \ddots & \ddots &\ & \vdots \\
\  & \ddots  & \ & \ddots &\ddots  &b_{1} \\
\ & \ & b_{k-1} & \dots & b_{1}  &b_{0} \\
\end{array}
\right]
+\left[
\begin{array}{cccccc}
0 & \  & \ & \  & \  &  \   \\
\    &  \ddots &\ &  \ & \  & \    \\
\ & \ &  0 & \ &\  & \   \\
\ & \ & \ & 2*b_{k} &\ & \ \\
\  & \  & \ & \    &\ddots  &\ \\
\ & \ & \ & \ & \  &  2*\sum\limits_{j=k}^{n-1}b_{j} \\
\end{array}
\right] }
\end{equation*}
with `$k$' being the bandwidth of the preconditioner. In practical computation, we choose $k=8$. Besides, the multigrid methods \cite{SJC-BIT-2001,DKMT-AdvCM-2020} are also presented in our numerical experiments.

In the following tables, `${\tau}_{pre}$', `$C_{pre}$', `$B_{pre}$' and `$N_{pre}$' represent the PCG method with the $\tau$-preconditioner, the Strang circulant preconditioner, the banded preconditioner, and no preconditioner, respectively. `$M_{pre}$' denotes the multigrid preconditioned method with the banded preconditioner proposed in \cite{DKMT-AdvCM-2020} and `$MGM$' represents the algebraic multigrid method shown in \cite{SJC-BIT-2001}.
Besides, `$n$' denotes the spatial grid points, `Iter' displays the number of iterations required for convergence by those methods, and `CPU(s)' signifies the CPU time in second for solving the linear systems. In particular, `$-$' implies the CPU time over $10^4$ and `$*$' represents the number of iterations over $10^3$.
For all tested methods, let $u_{0}=0$ be the initial guess and the stopping criterion is chosen as
\begin{equation*}
\frac{\left\|r_{q}\right\|_2}{\left\|r_{0}\right\|_2}<10^{-8},
\end{equation*}
where $r_{q}$ is the residual vector at the $q$-th iteration.

\begin{example}\label{ex1}
Consider 1D Riesz fractional diffusion equations defined in $[0,1]$. Take the diffusion coefficient $d=1$, and the source term is determined by the exact solution, which is given by $u(x)=x^{2}(1-x)^{2}$.
\end{example}

The number of iterations by those iterative methods for solving  \cref{ex1} are displayed in \cref{t1}. It is obvious that both the $\tau$-preconditioner and the circulant preconditioner show good performance, while the banded preconditioner needs more iterations.
On the other hand, \cref{fig} exhibits the spectral distribution of the coefficient matrix and the preconditioned matrices with ${\alpha}=1.2$ and $n=2^{10}-1$, where the values on the $x$-axis direction represent the logarithmic values of eigenvalues. We observe that the spectrum of the coefficient matrix $A$ is scattered on the coordinate axis from the left figure of \cref{fig} and hence more iterations are required by the CG method to converge.
The comparisons of the spectral distribution of the preconditioned matrices with different preconditioners are shown in the right one,
where
`$\tau^{-1}A$', `$B^{-1}A$', and `$C^{-1}A$' denote the preconditioned matrices with the $\tau$-preconditioner, the banded preconditioner, and the circulant preconditioner, respectively. Note that the $\tau$-preconditioner has a highly clustered spectrum, which illustrates the better performance of the $\tau$-preconditioner.

Moreover, we list the extreme eigenvalues of $\tau^{-1}A$ for $\alpha=1.8$ in \cref{tc}. We derive that all the eigenvalues are located in the open interval $(1/2,3/2)$,
which is coincident with our theoretical analysis. Those numerical results exemplify the efficiency of the proposed preconditioner.

\begin{table}[!htbp]
	\setlength{\belowcaptionskip}{10pt}
	\renewcommand{\arraystretch}{1.25}
	\centering
	\caption{Comparisons of the iterations for solving \cref{ex1} for different ${\alpha}$ by the CG method, and the PCG methods with the ${\tau}$-preconditioner, the circulant preconditioner, and the banded preconditioner.}
	\label{t1}

	\resizebox{\textwidth}{!}{%
{\footnotesize
		\begin{tabular}{c|ccrr|ccrr|ccrr}
			\toprule
\multirow{2}{*}{$n_{1}+1$}
 &   \multicolumn{4}{c}{$\alpha=1.2$} & \multicolumn{4}{c}{$\alpha=1.5$} & \multicolumn{4}{c}{$\alpha=1.8$} \\
\cline{2-5}       \cline{6-9}  \cline{10-13}
  &$\tau_{pre}$ &$C_{pre}$ &$B_{pre}$ &$N_{pre}$  &$\tau_{pre}$ &$C_{pre}$ &$B_{pre}$ &$N_{pre}$                  &$\tau_{pre}$ &$C_{pre}$ &$B_{pre}$ &$N_{pre}$ \\
  \midrule
 $2^6$         & 5      & 5   &  9        &  32   & 5   & 5   & 9    & 32   & 4    & 5   &  7   & 32 \\
 $2^7$         & 5      & 5   &  12       &  63   & 5   & 5   & 11   & 62   & 5    & 6   &  8   & 64 \\
 $2^8$         & 5      & 6   &  16       & 110   & 5   & 7   & 14   & 111  & 5    & 7   &  10  & 126 \\
 $2^9$         & 6      & 6   &  20       & 178   & 6   & 7   & 17   & 192  & 5    & 7   &  11  & 238  \\
 $2^{10}$      & 6      & 6   &  26       & 279   & 6   & 8   & 21   & 328  & 6    & 7   &  13  &448  \\ \bottomrule
	\end{tabular}}}
\end{table}

\begin{figure}[!htbp]
\centering
\begin{minipage}[t]{0.48\textwidth}
{\includegraphics[width=6.2cm]{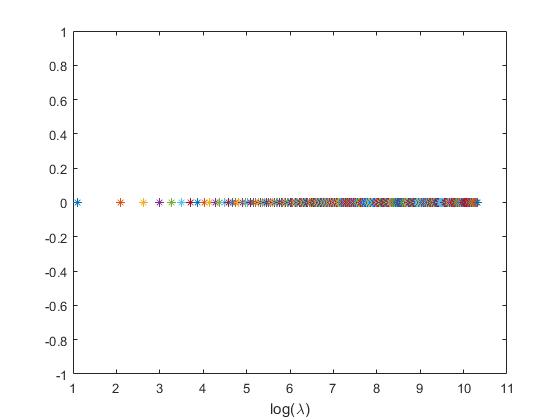}}
\end{minipage}
\begin{minipage}[t]{0.48\textwidth}
{\includegraphics[width=6.2cm]{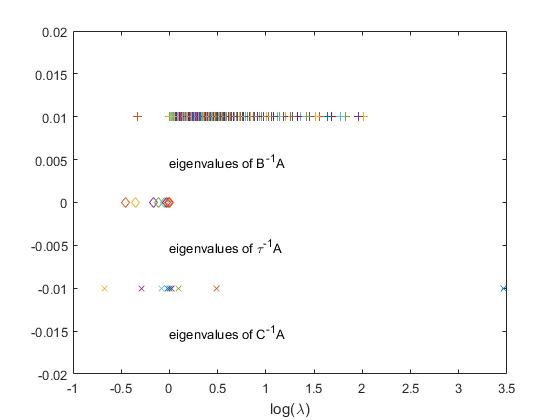}}
\end{minipage}
\caption{The spectral distribution of coefficient matrix $A$ and preconditioned matrices $P^{-1}A$.}
\label{fig}
\end{figure}

\begin{table}[!htbp]
	\setlength{\belowcaptionskip}{10pt}
	\renewcommand{\arraystretch}{1.25}
	\centering
\caption{Extreme eigenvalues of preconditioned matrix ${\tau}^{-1}A$ of \cref{ex1} with $\alpha=1.8$.}\label{tc}
\resizebox{0.8\textwidth}{!}{%
{\footnotesize
		\begin{tabular}{cccccccc}
			\toprule
n  & $2^6$ & $2^7$ & $2^8$ & $2^9$ & $2^{10}$ & $2^{11}$ & $2^{12}$ \\
\hline
$\lambda_{\max}$  & 1.0001 & 1.0001 & 1.0001 &1.0001 & 1.0001 & 1.0001 & 1.0001  \\
$\lambda_{\min}$  & 0.8721 & 0.8586 & 0.8473 & 0.8379 & 0.8300 & 0.8232 & 0.8173 \\
 \bottomrule
	\end{tabular}}}
\end{table}

\begin{example}\label{ex2}
In this example, we consider $2D$ Riesz fractional diffusion equations in ${\Omega}\in [0,1]\times[0,1]$, where the exact solution is
\begin{equation*}
u(x_{1},x_{2})=x_{1}^{2}(1-x_{1})^{2}{x_{2}}^{2}(1-x_{2})^{2}.
\end{equation*}
Take the diffusion coefficients $d_{1}=d_{2}=1$. The source term is given by
\begin{align*}
y=&\frac{d_1}{\cos(\pi\alpha_1/2)}x_{2}^{2}(1-x_{2})^{2}(y_{1}(x_{1},\alpha_1)+y_{1}(1-x_{1},\alpha_1)) \\
&+\frac{d_2}{\cos(\pi\alpha_2/2)}x_{1}^{2}(1-x_{1})^{2}(y_{1}(x_{2},\alpha_2)+y_{1}(1-x_{2},\alpha_2)),
\end{align*}
where
\begin{equation*}
y_{1}(x,\alpha)=\frac{2}{\Gamma(3-\alpha)}x^{2-\alpha}-\frac{12}{\Gamma(4-\alpha)}x^{3-\alpha}+
\frac{24}{\Gamma(5-\alpha)}x^{4-\alpha}.
\end{equation*}
\end{example}

\begin{table}[t]
	\setlength{\belowcaptionskip}{10pt}
	\renewcommand{\arraystretch}{1.25}
	\centering
	\caption{Comparisons for solving \cref{ex2} for different ${\alpha}$ by the CG method, and the PCG methods with the ${\tau}$-preconditioner, and the circulant preconditioner, and multigrid methods.}
	\label{t2}

	\resizebox{\textwidth}{!}{%
{\footnotesize
		\begin{tabular}{c|c|cr|cr|rr|rr|rr}
			\toprule
			\multirow{2}{*}{$(\alpha_{1},\alpha_{2})$} &\multirow{2}{*}{$n_{1}+1$}  & \multicolumn{2}{c}{${\tau}_{pre}$} &\multicolumn{2}{c}{$C_{pre}$} &\multicolumn{2}{c}{$M_{pre}$}\ &\multicolumn{2}{c}{$MGM$} &\multicolumn{2}{c}{$N_{pre}$} \\
		\cline{3-4}\cline{5-6}\cline{7-8}\cline{9-10}\cline{11-12}
	&	&Iter&CPU(s)&Iter&CPU(s)&Iter&CPU(s)&Iter&CPU(s)&Iter&CPU(s)\\
			\midrule
(1.1,1.2) & $2^6$  & 7 & 0.11  & 17 & 0.39  &13 &0.64  &13 &0.09   &93  & 0.75   \\
          & $2^7$  & 7 & 0.21  & 19 & 0.45  &14 &0.89  &11 &0.45   &157 & 3.63   \\
          & $2^8$  & 8 & 1.37  & 21 & 2.17  &17 &4.42  &10 &2.89   &237 & 12.34  \\
          & $2^9$  & 8 & 5.06  & 24 & 9.84  &22 &12.20 &9  &5.59   &383 & 82.59  \\
       & $2^{10}$  & 9 &13.03  & 27 & 29.56 &31 &80.05 &9  &24.33  &585 & 490.56 \\

\hline
(1.4,1.5) & $2^6$  & 7 & 0.22  & 16 & 0.33  &12 &0.72 &12 &0.50   &  91  & 0.78         \\
          & $2^7$  & 7 & 0.31  & 19 & 0.45  &13 &0.86 &11 &0.80   &  157 & 3.25          \\
          & $2^8$  & 8 & 1.08  & 23 & 2.08  &16 &3.78 &12 &3.14   &  269 & 14.23         \\
          & $2^9$  & 8 & 4.34  & 28 & 11.78 &23 &13.27&12 &7.56   &  457 & 90.08         \\
       & $2^{10}$  & 9 & 12.56 & 32 & 34.11 &36 &96.06&13 &31.25  &  771 & 615.25        \\

\hline
(1.8,1.9) & $2^6$     & 6   & 0.10   & 19 & 0.53 &11 &0.59 &16 &0.11  &126  &0.86           \\
          & $2^7$     & 6   & 0.14   & 24 & 1.05 &12 &0.75 &15 &0.72  &243  &5.28           \\
          & $2^8$     & 7   & 0.67   & 31 & 2.84 &12 &4.28 &15 &3.77  &467  &27.97          \\
          & $2^9$     & 7   & 2.92   & 40 & 15.91&22 &12.45&16 &9.64  &901  &178.20         \\
          & $2^{10}$  & 7   & 10.42  & 52 & 54.52&34 &85.94&17 &40.53 &1740 &1383.60        \\
\hline
(1.2,1.8) & $2^6$     & 6   & 0.17   & 19  & 0.27 &9  &0.58  &42  &0.39  &  127  & 1.05       \\
          & $2^7$     & 7   & 0.31   & 27  & 0.83 &10 &0.73  &60  &2.50  &  247  & 5.06       \\
          & $2^8$     & 7   & 1.22   & 33  & 3.20 &14 &3.94  &87  &19.28 &  463  & 24.30       \\
          & $2^9$     & 8   & 4.05   & 44  & 18.80&21 &12.44 &126 &73.89 &  881  & 173.53     \\
          & $2^{10}$  & 8   & 11.56  & 58  & 61.42&30 &73.19 &184 &402.94&  1671 & 1352.76     \\ \bottomrule
	\end{tabular}}}
\end{table}


In the 2D case, we exploit the multigrid preconditioned method and the algebraic multigrid method for comparisons, where the weight of the Jacobi iterative method is chosen as $w=2/3$.


In this example, we take $n_{1}=n_{2}$.  From \cref{t2},  note that both  of the number of iterations and the CPU time provided by the $\tau$-preconditioner are much less than those by other methods. It demonstrates    the superiority of the $\tau$-preconditioner.

\begin{example}\label{ex3}
In this example, we test 3D Riesz fractional diffusion equations. Consider
\begin{equation*}
{\Omega}=[0,1]\times[0,1]\times[0,1],\ d_{1}=d_{2}=d_{3}=1,
\end{equation*}
\begin{align*}
y=&\frac{d_{1}}{\cos(\pi\alpha_{1}/2)}x_{2}^{2}(1-x_{2})^{2}x_{3}^{2}(1-x_{3})^{2}(y_{1}(x_{1},\alpha_1)
+y_{1}(1-x_{1},\alpha_1)) \\
&+\frac{d_{2}}{\cos(\pi\alpha_{2}/2)}x_{1}^{2}(1-x_1)^{2}x_{3}^{2}(1-x_{3})^{2}(y_{1}(x_{2},\alpha_2)+y_{1}(1-x_{2},\alpha_2)) \\
&+\frac{d_{3}}{\cos(\pi\alpha_{3}/2)}x_{1}^{2}(1-x_{1})^{2}x_{2}^{2}(1-x_{2})^{2}(y_{1}(x_{3},\alpha_3)+y_{1}(1-x_{3},\alpha_3)).
\end{align*}
The exact solution is $u=x_{1}^{2}(1-x_{1})^{2}{x_{2}}^{2}(1-x_{2})^{2}{x_{3}}^{2}(1-x_{3})^{2}$.
\end{example}

\begin{table}[!htbp]
	\setlength{\belowcaptionskip}{10pt}
	\renewcommand{\arraystretch}{1.25}
	\centering
	\caption{Comparisons for solving \cref{ex3} for different ${\alpha}$ by the CG method, and the PCG methods with the ${\tau}$-preconditioner and the circulant preconditioner.}
	\label{t3}

	\resizebox{0.85\textwidth}{!}{%
{\footnotesize
		\begin{tabular}{c|c|cr|cr|rr}
			\toprule
			\multirow{2}{*}{$(\alpha_{1},\alpha_{2},\alpha_{3})$} &\multirow{2}{*}{$n_{1}+1$}  & \multicolumn{2}{c}{${\tau}_{pre}$} &\multicolumn{2}{c}{$C_{pre}$} &\multicolumn{2}{c}{$N_{pre}$}\\
		\cline{3-4}\cline{5-6}\cline{7-8}
	&	&Iter&CPU(s)&Iter&CPU(s)&Iter&CPU(s)\\
			\midrule
(1.1,1.2,1.3) & $2^4$   & 6     & 0.19   & 14 & 0.31     &  40   & 0.28     \\
              & $2^5$   & 6     & 0.73   & 17 & 1.41     &  70   & 3.67     \\
              & $2^6$   & 7     & 5.34   & 21 & 11.88    &  118  & 36.27    \\
              & $2^7$   & 8     & 29.39  & 24 & 69.69    &  191  & 288.64   \\
              & $2^8$   & 8     & 226.28 & 27 & 521.38   &  304  & 2.98e+3  \\
\hline
(1.4,1.5,1.6) & $2^4$      & 6  & 0.22    & 15 & 0.40     &  39  & 0.19     \\
              & $2^5$      & 7  & 0.48    & 18 & 1.42     &  71  & 3.69     \\
              & $2^6$      & 7  & 4.77    & 22 & 11.03    &  128 & 36.64    \\
              & $2^7$      & 7  & 25.84   & 25 & 72.28    &  223 & 335.20   \\
              & $2^8$      & 8  & 200.64  & 32 & 611.23  &  387 & 3.77e+3   \\
\hline
(1.7,1.8,1.9) & $2^4$     & 5   & 0.23   & 16 & 0.45   &  45  & 0.25        \\
              & $2^5$     & 6   & 0.64   & 20 & 1.59   &  88  & 4.06        \\
              & $2^6$     & 6   & 3.93   & 26 & 13.88  &  169 & 50.38       \\
              & $2^7$     & 6   & 24.34  & 35 & 95.42  &  328 & 495.33      \\
              & $2^8$     & 7   & 180.61 & 44 & 832.36  & 628 & 6.08e+3     \\
\hline
(1.2,1.5,1.8) & $2^4$     & 6   & 0.25   & 16 & 0.36   &  43  & 0.23        \\
              & $2^5$     & 6   & 0.64   & 20 & 1.63   &  83  & 3.73        \\
              & $2^6$     & 7   & 4.95   & 25 & 13.70  &  157 & 48.13       \\
              & $2^7$     & 8   & 18.11  & 33 & 94.09  &  295 & 441.98      \\
              & $2^8$     & 8   & 198.59 & 44 & 828.98 & 551  &5.34e+3      \\  \bottomrule
	\end{tabular}}}
\end{table}

In this example, let $n_{1}=n_{2}=n_{3}$. It is evident that the ${\tau}$-preconditioner is still efficient for 3D Riesz fractional diffusion equations from the \cref{t3}.
In contrast with the circulant preconditioner, the number of iterations by the ${\tau}$-preconditioner is almost unchanged when the matrix size increases. Hence, ${\tau}$-preconditioner is an excellent tool for solving ill-conditioned multi-level Toeplitz systems arising from multi-dimensional Riesz fractional diffusion equations.

Finally, we verify the effectiveness of the preconditioning of discretized Riesz fractional derivatives for handling the ill-conditioned multi-level Toeplitz matrices whose generating functions are with fractional order zeros at the origin.

\begin{example}\label{ex4}
Consider a two-level Toeplitz matrix whose generating function is defined by
\begin{equation*}
p_{\bafa}(\btheta)=p_{\alpha_1}(\theta_1)+p_{\alpha_2}(\theta_2)-p_{1}(\theta_1)p_{1}(\theta_2),
\end{equation*}
where {\rm (see \cite{RS-JSC-1998})}
\begin{equation*}
p_{\alpha_i}({\theta_i})=\left\{
\begin{array}{cc}
|{\theta_i}|^{\alpha_i}, & |{\theta_i}|<\frac{\pi}{2}, \\
1, & |{\theta_i}|\ge\frac{\pi}{2}.
\end{array}
\right.
\end{equation*}
\end{example}

It is obvious that
\begin{equation*}
0<\frac{4-\pi}{4}\le\frac{p_{\bafa}(\btheta)}{q_{\bafa}(\btheta)}\le1,
\end{equation*}
where $q_{\bafa}(\btheta)$ is defined in \eqref{GQ} with $l_1=l_2=1$. The above inequality exemplifies that the generating function $p_{\bafa}(\btheta)$ satisfies the \cref{assumption}.

\begin{table}[!htbp]
	\setlength{\belowcaptionskip}{10pt}
	\renewcommand{\arraystretch}{1.25}
	\centering
	\caption{Comparisons for solving \cref{ex4} for different ${\alpha}$ by the CG method, and the PCG methods with the ${\tau}$-preconditioner, the circulant preconditioner, and our preconditioner, and the algebraic multigrid method.}
	\label{t4}
{\footnotesize
	\resizebox{\textwidth}{!}{%
		\begin{tabular}{c|c|cr|rr|rr|rr|rr}
			\toprule
			\multirow{2}{*}{$(\alpha_{1},\alpha_{2})$} &\multirow{2}{*}{$n_{1}+1$}  &\multicolumn{2}{c}{$R_{pre}$} & \multicolumn{2}{c}{${\tau}_{pre}$} &\multicolumn{2}{c}{$C_{pre}$} &\multicolumn{2}{c}{$N_{pre}$} &\multicolumn{2}{c}{$MGM$}   \\
		\cline{3-4}\cline{5-6}\cline{7-8}\cline{9-10} \cline{11-12}
	&	&Iter&CPU(s)&Iter&CPU(s)&Iter&CPU(s)&Iter&CPU(s)&Iter&CPU(s)\\
			\midrule
(1.9,1.5)&$2^6$      & 26  & 0.42   & 13  & 0.23   &26    & 0.31    &79   & 0.36  &9  &0.22 \\
          & $2^7$    & 26  & 0.92   & 13  & 0.44   &36    & 1.17    &134  & 2.95  &10 &0.67 \\
          & $2^8$    & 27  & 4.97   & 16  & 2.58   &45    & 4.81    &225  & 15.69 &12 &2.36  \\
          & $2^9$    & 27  & 15.41  & 19  & 12.77  &69    & 35.14   &376  & 136.80&14 &11.75  \\
       & $2^{10}$    & 27  & 46.48  & 25  & 50.16  &192   & 278.50  &566  & 572.09&18 &58.38 \\
       & $2^{11}$    & 27  & 188.77 & 42  & 302.30 &239   & 1.44e+3 &956  &4.19e+3&21 &329.05 \\
       & $2^{12}$    & 27  & 659.05 & 57  & 1.36e+3&782   & --      &*    &  --   &26 &1.71e+3  \\
\hline
(1.9,1.7)& $2^6$    & 26   & 0.53    & 14 & 0.14   & 29 & 0.63     &90    &0.42   &9  &0.22  \\
          & $2^7$   & 26   & 0.84    & 17 & 0.63   & 44 & 1.34     &159   & 4.44  &9  &0.77  \\
          & $2^8$   & 26   & 3.95    & 20 & 2.98   & 79 & 8.02     &278   & 20.34 &10 &2.24  \\
          & $2^9$   & 26   & 15.47   & 30 & 19.05  & 145 & 73.41   &467   & 132.67&11 &9.77  \\
       & $2^{10}$   & 27   & 47.89   & 49 & 83.78  & 270 & 385.14  &846   & 907.67&12 &48.60  \\
       & $2^{11}$   & 27   & 188.72  & 85 &565.31  & 736 &5.20e+3  &*     & --    &13 &216.62   \\
       & $2^{12}$   & 27   & 690.84  &219 &5.09e+3 &*   & --       &*     &  --   &15 &1.01e+3  \\

\hline
(1.9,1.9) & $2^6$     & 27   & 0.39   & 15 & 0.19   & 30  & 0.27   &97   & 0.64     &9 &0.33   \\
          & $2^7$     & 27   & 0.91   & 21 & 0.93   & 57  & 1.77   &179  & 3.56     &9 &0.64   \\
          & $2^8$     & 27   & 4.17   & 26 & 4.12   & 85  & 8.58   &327  & 22.75    &9 &2.33   \\
          & $2^9$     & 27   & 17.27  & 44 & 26.34  & 212 &111.33  &573  & 210.27   &9 &8.41   \\
          & $2^{10}$  & 27   & 47.45  & 80 & 135.02 & 622 &884.95   &943  & 948.84  &9 &37.30  \\
          & $2^{11}$  & 27   & 190.06 & 195&1.29e+3 & *   &--       &*   & --       &10&166.47 \\
          & $2^{12}$  & 27   & 660.47 &530 &--      & *   &--       &*   &  --      &10&683.06 \\
			\bottomrule
	\end{tabular}}}
\end{table}

To test the efficiency of our proposed preconditioner defined in \eqref{new_pre}, the PCG methods with the ${\tau}$-preconditioner, the circulant preconditioner, and the case without preconditioner are proposed as comparisons. Denote our preconditioner as `$R_{pre}$' in \cref{t4}.

From \cref{t4}, we see that the number of iterations deriving from the circulant preconditioner, the natural ${\tau}$-preconditioner, and no preconditioner increase rapidly as the matrices size increase, while that by our proposed preconditioner almost keeps constant. In terms of the multigrid method, we observe that the method implements well when the orders of the zeros are closed. Since the cost per iteration of using the algebraic multigrid method is about $8/3$ times than the required by the $\tau$-preconditioner \cite{SJC-BIT-2001},
in light of \cref{t4}, we derive that the performance of the multigrid method is as efficient as the proposed method. Nevertheless, the multigrid method will be inefficient provided that the orders of the zeros are quite difference; see the item for $(\alpha_1,\alpha_2)=(1.9,1.5)$ in \cref{t4}. Moreover, the linearly convergent rate is still a question for these cases by the algebraic multigrid method.


\section{Conclusion remarks}\label{conclusion}
In this paper, we have studied the spectra of the $\tau$-preconditioned matrices for  the multi-level Toeplitz systems
arising from the multi-dimensional Riesz spatial fractional diffusion equations. Theoretically, we have proved that the spectra of the preconditioned matrices are bounded
below by 1/2 and bounded above by 3/2, and hence  the condition numbers of the preconditioned matrices are all less than $3$. Besides, we  proposed
a new preconditioner for ill-conditioned multi-level Toeplitz systems, which are generated by the generating functions with fractional order zeros at the origin.  We have  proved that the spectra of the proposed preconditioned matrices are bounded by constants which are independent of the matrices size. The numerical results have revealed that
the performance of the proposed preconditioner is much better than  that of other existing methods.
In our future work, we will
consider to combine the ${\tau}$-preconditioner with other methods to handle more general (non-symmetric) multi-level Toeplitz-like
systems arising from multi-dimensional fractional partial differential equations.


\begin{thebibliography}{999}

\bibitem{BEV-ArXiv-2019}
\textsc{N. Barakitis, S. E. Ekstr\"{o}m and P. Vassalos}, {\em Preconditioners for fractional diffusion equations based on the spectral symbol}, ArXiv preprint arXiv: 1912.13304 (2019).
%

\bibitem{BD-SISC-1997}
\textsc{F. Di Benedetto}, {\em Preconditioning of block Toeplitz matrices by sine transforms}, SIAM J. Sci. Comput., 18 (1997), pp. 499--515.

\bibitem{BFS-CMA-1993}
\textsc{F. Di Benedetto, G. Fiorentino and S. Serra Capizzano}, {\em CG preconditioning for Toeplitz matrices}, Comput. Math. Appl., 25 (1993), pp. 35--45.
%


\bibitem{BB-ACM-1990}
\textsc{D. Bini and F. Benedetto}, {\em A new preconditioner for the parallel solution of positive definite Toeplitz systems}, in Proceedings, 2nd SPAA Conference, Crete, Greece, July 1990, pp. 220--223.


\bibitem{BC-LAA-1983}
\textsc{D. Bini and M. Capovani}, {\em Spectral and computational properties of band symmetric Toeplitz matrices}, Linear Algebra Appl., 52/53 (1983), pp. 99--126.

\bibitem{BG-LAA-1998}
\textsc{A. B\"{o}ttcher and S. Grudsky}, {\em On the condition numbers of large semi-definite Toeplitz matrices}, Linear Algebra Appl., 279 (1998), pp. 285--301.
%


\bibitem{RM-band-1991}
\textsc{R. H. Chan}, {\em Toeplitz preconditioner for Toeplitz system with nonnegative generating function}, IMA J. Numer. Anal., 11 (1991), pp. 333--345.

\bibitem{RS-JSC-1998}
\textsc{R. H. Chan, Q. S. Chang and H. W. Sun}, {\em Multigrid method for ill-conditioned symmetric Toeplitz systems}, SIAM J. Sci. Comput., 19 (1998), pp. 516--529.

\bibitem{CNW-LAA-1996}
\textsc{R. H. Chan, M. K. Ng and C. K. Wong}, {\em Sine transform based preconditioners for symmetric Toeplitz systems}, Linear Algebra Appl., 232 (1996), pp. 237--259.


\bibitem{CYM-NA-2000}
\textsc{R. H. Chan, A. M. Yip and M. K. Ng}, {\em The best circulant preconditioners for Hermitian Toeplitz systems}, SIAM J. Numer. Anal., 38 (2000), pp. 876--896.


\bibitem{CW-Springer-2001}
\textsc{W. K. Ching}, {\em Iterative methods for Queuing and Manufacturing systems}, Springer-Verlag, London, 2001.


\bibitem{ZD-Riesz-2012}
\textsc{H. F. Ding and Y. X. Zhang}, {\em New numerical methods for the Riesz space fractional partial differential equations}, Comput. Math. Appl., 63 (2012), pp. 1135--1146.

\bibitem{DKMT-AdvCM-2020}
\textsc{M. Donatelli, R. Krause, M. Mazza and K. Trotti}, {\em Multigrid preconditioners for anisotropic space-fractional diffusion equations}, Adv Comput Math., 49 (2020).
%
\bibitem{DMS-JCP-2016}
\textsc{M. Donatelli, M. Mazza and S. Serra Capizzano}, {\em Spectral analysis and structure preserving preconditioners for fractional diffusion equations}, J. Comput. Phys., 307 (2016), pp. 262--279.



\bibitem{FS-CAL-1991}
\textsc{G. Fiorentino and S. Serra Capizzano}, {\em Multigrid methods for Toeplitz matrices}, Calcolo., 28 (1991), pp. 283--305.

\bibitem{FS-SISC-1996}
\textsc{G. Fiorentino and S. Serra Capizzano}, {\em Multigrid methods for symmetric positive definite block Toeplitz matrices with nonnegative generating functions}, SIAM J. Sci. Comput., 17 (1996), pp. 1068--1081.

\bibitem{GS-Toepliz-form}
\textsc{U. Grenander and G. Szeg\"{o}}, {\em Toeplitz Forms and Their Applications}, 2nd ed., Chelsea, New York, 1984.

\bibitem{H-application-physics-2000}
\textsc{R. Hilfer}, {\em Applications of Fractional Calculus in Physics}, World Scientific, Singapore, 2000.

\bibitem{HJ-MA-2012}
\textsc{R. A. Horn and C. R. Johnson}, {\em Matrix Analysis}, Cambridge University Press, Cambridge, 2012.

\bibitem{LS-JCP-2012}
\textsc{S. L. Lei and H. W. Sun}, {\em A circulant preconditioner for fractional diffusion equations}, J. Comput. Phys., 242 (2013, pp. 715--725.


\bibitem{LYJ-CCP-2015}
\textsc{F. R. Lin, S. W. Yang and X. Q. Jin},
{\em Preconditioned iterative methods for fractional diffusion equation},
J. Comput. Phys., 256 (2014), pp. 109--117.
%

\bibitem{MT-JCAM-2004}
\textsc{M. M. Meerschaert and C. Tadjeran}, {\em Finite difference approximations for fractional advection-dispersion flows equations}, J. Comput. Appl. Math., 172 (2004), pp. 209--219.

\bibitem{MDDM-JCP-2017}
\textsc{H. Moghaderi, M. Dehghan, M. Donatelli and M. Mazza}, {\em spectral analysis and multigrid preconditioners for two-dimensional space-fractional diffusion equations}, J. Comput. Phys., 350 (2017), pp. 992--1011.

\bibitem{NSV-TCS-2004}
\textsc{D. Noutsos, S. Serra Capizzano and P. Vassalos}, {\em Matrix algebra preconditioners for multilevel Toeplitz systems do not insure optimal convergence rate}, Theor Comput Sci., 315 (2004), pp. 557--579.


\bibitem{NS-2016-LAA}
\textsc{D. Noutsos, S. Serra Capizzano and P. Vassalos},
{\em Essential spectral equivalence via multiple step preconditioning and applications to ill conditioned Toeplitz matrices}, Linear Algebra Appl., 491 (2016), pp. 276--291.



\bibitem{PS-JSC-2016}
\textsc{H. K. Pang and H. W. Sun}, {\em Fast numerical contour integral method for fractional diffusion equations},
J. Sci. Comput., 66 (2016), pp. 41--66.


\bibitem{P-JMA-2019}
\textsc{J. Pestana}, {\em Preconditioners for symmetrized Toeplitz and multi-level Toeplitz matrices}, SIAM J. Matrix. Anal. Appl., 44 (2019), pp. 870--887.

\bibitem{P-FDE-1999}
\textsc{I. Podlubny}, {\em Fractional differential equations}, Academic Press, New York, 1999.

\bibitem{PS-BIT-1999}
\textsc{D. Potts and G. Steidl}, {\em Preconditioners for ill-conditioned Toeplitz Matrices}, BIT., 39 (1999), pp. 513--533.

\bibitem{PS-SISC-2001}
\textsc{D. Potts and G. Steidl}, {\em Preconditioners  for ill-conditioned Toeplitz systems constructed from positive kernels}, SIAM J. Sci. Comput., 22 (2001), pp. 1741--1761.



\bibitem{AG-Chaos-1997}
\textsc{A. I. Saichev and G. M. Zaslavsky}, {\em Fractional kinetic equations: solutions and applications},
Chaos., 7 (1997), pp. 753--764.

\bibitem{Serra-Calcolo-1995}
\textsc{S. Serra Capizzano}, {\em New PCG based algorithms for the solution of Hermitian Toeplitz systems}, Calcolo., 32 (1995), pp. 153--176.
%
\bibitem{Serra-LAA-1998}
\textsc{S. Serra Capizzano}, {\em On the extreme eigenvalues of Hermitian (block) Toeplitz matrices}, Linear Algebra Appl., 270 (1998), pp. 109--129.
%

\bibitem{S-MC-1999}
\textsc{S. Serra Capizzano}, {\em Superlinear PCG methods for symmetric Toeplitz systems}, Math. Comput., 68 (1999), pp. 793--803.



\bibitem{SJC-BIT-2001}
\textsc{H. W. Sun, X. Q. Jin and Q. S. Chang}, {\em Convergence of the multigrid method for ill-conditioned block Toeplitz systems}, BIT., 41 (2001), pp. 179--190.


\bibitem{T-Riesz-appication-2010}
\textsc{V. E. Tarasov}, {\em Fractional Dynamics: Applications of Fractional Calculus to Dynamics of Particles}, Fields and Media Higher Education Press, Beijing, 2010.




\end{thebibliography}

\end{document}